\newtheorem{prop}{Proposition}[section]
\newtheorem{cor}[prop]{Corollary}
\newtheorem{lem}[prop]{Lemma}
\newtheorem{thm}[prop]{Theorem}
\newtheorem{Proposition}[prop]{Proposition}
\theoremstyle{definition}
\newtheorem{defin}[prop]{Definition}
\newtheorem{exa}[prop]{Example}
\newtheorem{convention}[prop]{Convention}
\newcommand{\hp} {\mathop{\mathrm{h}_{\mathbb{P}^N}}}
\newcommand{\hF} {\mathop{\mathrm{h}_{\mathrm{F}}}}
\newcommand{\Jac} {\mathop{\mathrm{Jac}}}
\newcommand{\degr} {\mathop{\mathrm{deg}}}
\newcommand{\Spec} {\mathop{\mathrm{Spec}}}
\newcommand{\Q} {\mathbb{Q}}
\newcommand{\Z} {\mathbb{Z}}
\newcommand{\N} {\mathbb{N}}
\newcommand{\R} {\mathbb{R}}
\newcommand{\C} {\mathbb{C}}
\newcommand{\Qf} {\mathcal{Q}}
\newcommand{\m} {m}
\newcommand{\Oseen} {\mathcal{O}}
\newcommand{\Qbar} {\overline{\Q}}
\newcommand{\czero} {c_1}
\newcommand{\cone} {c_2}
\newcommand{\ctwo} {c_3}
\newcommand{\cthree} {c_{6}}
\newcommand{\cfour} {c_7}
\newcommand{\cfive} {c_8}
\newcommand{\csix} {c_9}
\newcommand{\cseven} {c_{13}}
\newcommand{\ceight} {c_{14}}
\newcommand{\cnine} {c_{15}}
\newcommand{\cten} {c_{16}}
\newcommand{\cthirteen} {c_{18}}
\newcommand{\cfourteen} {c_{19}}
\newcommand{\ctwenty} {c_{10}}
\newcommand{\ctwentyseven} {c_{11}}
\newcommand{\ctwentyeight} {c_{17}}
\newcommand{\cthirtyone} {c_{12}}
\newcommand{\cthirtytwo} {c_{12}}
\newcommand{\constii} {c_{4}}
\newcommand{\constiii} {c_{5}}
\newcommand{\AK}{{SS\Theta}}
\begin{document}

\title{Bertini and Northcott}
\author{{F}abien {P}azuki and {M}artin {Widmer}}
\address{Fabien Pazuki. University of Copenhagen, Institute of Mathematics, Universitetsparken 5, 2100 Copenhagen, Denmark, and Universit\'e de Bordeaux, IMB, 351, cours de la Lib\'eration, 33400 Talence, France.}
\email{fpazuki@math.ku.dk}
\address{Martin Widmer. Department of Mathematics, Royal Holloway, University of London,
Egham, Surrey, TW20 0EX, United Kingdom}
\email{Martin.Widmer@rhul.ac.uk}

\maketitle

\begin{abstract}
We prove a new Bertini-type Theorem with explicit control of the genus, degree, height, and the field of definition of the constructed curve. 
As a consequence we provide a general strategy to reduce certain height and rank estimates on abelian varieties over a number field $K$ to the case of jacobian varieties defined over a suitable extension of $K$. 
\end{abstract}

{\flushleft
\textbf{Keywords:} Bertini, Northcott, height, abelian varieties.\\
\textbf{Mathematics Subject Classification:} 11G10, 11G30, 11G50, 14G40, 14K15.}

\begin{center}
---------
\end{center}

\begin{center}
\textbf{Bertini et Northcott}
\end{center}

\begin{abstract}
Nous pr\'esentons un th\'eor\`eme de Bertini avec contr\^ole sur le genre, le degr\'e, la hauteur et le corps de d\'efinition de la courbe obtenue. Ceci fournit une strat\'egie de r\'eduction de certains \'enonc\'es d'estimations sur la hauteur ou le rang des vari\'et\'es ab\'eliennes g\'en\'erales d\'efinies sur un corps de nombres $K$ au cas des jacobiennes d\'efinies sur une extension de $K$ appropri\'ee.
\end{abstract}

{\flushleft
\textbf{Mots-Clefs:} Bertini, Northcott, hauteur, vari\'et\'es ab\'eliennes.\\
}

\begin{center}
---------
\end{center}

\thispagestyle{empty}

\section{Introduction}

Let $A$ be an abelian variety of dimension $g$ over a number field $K$. We denote 
by $\Delta_K$ the discriminant of $K$. Let $\mathrm{rank}(A(K))$ denote the Mordell-Weil rank of $A(K)$ and $\hF(A/K)$ the relative Faltings height of $A$. The first author recently gave in \cite{Paz16} a proof of the inequality 
\begin{alignat}1\label{heightrank}
\mathrm{rank}(A(K))\leq c(g) [K:\Q]^3 \max\{1, \hF(A/K), \log\vert \Delta_K\vert\},
\end{alignat}
for an explicit constant $c(g)>0$. 
It is unknown whether a dependence on $\Delta_K$ is needed.
However, (\ref{heightrank}) was achieved by combining two inequalities: the first one is a classical descent inequality between the Mordell-Weil rank of $A(K)$ and the logarithm of the product of the norms of the primes of bad reduction for $A/K$. The second one is an inequality between the logarithm of the product of the norms of the primes of bad reduction for $A/K$ and the Faltings height of $A$, obtained using the strategy of reducing the general abelian case to the jacobian case.

In the present paper, we axiomatize this strategy of \textit{reducing to the jacobian case} to enable us to use it in other contexts. A classical dimension argument shows that when the dimension $g$ is big, jacobian varieties become rarer in the moduli space of principally polarized abelian varieties, at least over $\C$.  

Over $\overline{\mathbb{Q}}$, Tsimerman \cite{Tsi12} proved that for every $g\geq  4$ there exist abelian varieties of dimension $g$ that are not isogenous to any jacobian. Masser and Zannier \cite{MaZa20} even recently proved the following: 
for every $g\geq  4$ ``almost every'' principally polarized abelian variety of dimension
$g$, defined over an extension of $\Q$ of degree at most $2^{16g^4}$, is Hodge generic and not
isogenous to any jacobian (see Theorem 1.3 in \cite{MaZa20} for the precise statement).

Hence, reducing to the case of jacobians is \textit{a priori} non-trivial. The jacobians we reduce to may have significantly larger dimension than the original abelian variety. The main advantage of this reduction lies in the fact that more tools are available for jacobians than for general abelian varieties.
\\

Before stating our first result we need to introduce the Northcott number of a set of algebraic numbers, following the terminology of \cite{ViVi16} page 59. 
Here we use $h_\infty(\cdot)$ for the absolute logarithmic Weil height on $\Qbar$ (defined in Section \ref{defs}), whereas in \cite{ViVi16} the exponential  Weil height is used,
so that their Northcott number is the exponential of the Northcott number defined here.
\begin{defin}\label{Northcott}
Let $S$ be a set of algebraic numbers. For any real number $t$, define the set $S_t=\{s\in{S}\,\vert\, h_\infty(s)\leq t\}$. Let $\m(S)=\inf\{t\,\vert\, S_t \, \mathrm{is}\; \mathrm{infinite}\}$. We will call $\m(S)$ the \emph{Northcott number} of $S$. 
\end{defin}

In particular we have $\m(\Qbar)=0$ and $\m(K)=\infty$ for any number field $K$. 
\begin{convention}\label{convention}
For the remainder of the introduction  we fix  an arbitrary infinite subset  $S$ of the algebraic numbers $\Qbar$ with finite Northcott number $\m(S)$.
\end{convention}
For a projective variety $Y$, let $\hp(Y)$ stand for the height of a Chow form of $Y$ (this is a projective height defined in Section \ref{defs}). In this paper varieties are always assumed to be geometrically irreducible.
Our first result is a Bertini-type statement with some control on the genus, the degree, the height, and the field of definition of the resulting curve. 

\begin{thm}\label{theo1}
Let $K$ be a subfield of $\overline{\mathbb{Q}}$, and let $X$ be a non-singular closed subvariety of $\mathbb{P}^N_{K}$ with $\dim X\geq2$. 
Then there exists a finite subset $s\subset S$, and a non-singular, geometrically irreducible curve $C$ on $X$, defined over $K(s)$, with genus $g(C)\leq (\deg X)^2+\deg X$,
with $\deg C\leq \deg X$ and with $\hp(C)\leq \hp(X)+(\dim X)(\deg X)(N+1)(\m(S)+2)$.
\end{thm}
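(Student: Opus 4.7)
The plan is to produce $C$ by iteratively intersecting $X$ with $\dim X-1$ hyperplanes chosen so as to preserve smoothness and geometric irreducibility at each step (Bertini), while forcing their coefficients to lie in the set $S$ so that $C$ is defined over $K(s)$ for some finite $s\subset S$. I would set $Y_0=X$ and $Y_j=Y_{j-1}\cap H_j$ for $j=1,\ldots,\dim X -1$, and then take $C=Y_{\dim X -1}$.

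The geometric Bertini theorem supplies, at each step, a non-empty Zariski open subset $U_j\subset \check{\mathbb{P}}^N$ of hyperplanes $H$ for which $Y_{j-1}\cap H$ is both geometrically irreducible and non-singular. Its complement is cut out by a nonzero polynomial $F_j(a_0,\ldots,a_N)$ in the hyperplane coefficients. To find $H_j\in U_j$ with all coefficients in $S$, I would choose $a_0,\ldots,a_N$ one at a time from $S_{\m(S)+1}$: at each stage the restriction of $F_j$ to the remaining unknowns is either identically zero (in which case we pick freely), or it is a nonzero polynomial in the current variable having only finitely many zeros in $\Qbar$, so the infinitude of $S_{\m(S)+1}$ provides an allowed choice. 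Since $F_j\not\equiv 0$, this procedure yields $H_j$ with all coefficients in $S_{\m(S)+1}$, and $C$ is automatically defined over $K(s)$, where $s\subset S$ is the finite collection of selected $a_i$.

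For the dimension and degree, properness of each intersection (which holds precisely because $H_j\in U_j$) gives $\dim Y_j=\dim X -j$ and $\deg Y_j\leq \deg Y_{j-1}\leq \deg X$, so $\deg C\leq \deg X$. For the genus, since $C$ is a smooth geometrically irreducible curve of degree $d\leq \deg X$ in $\mathbb{P}^N$, a generic linear projection to $\mathbb{P}^2$ produces a (possibly singular) plane curve of the same degree and birational to $C$, whose arithmetic genus is $(d-1)(d-2)/2$; hence $g(C)\leq (d-1)(d-2)/2\leq (\deg X)^2+\deg X$. For the height, I would invoke an arithmetic B{\'e}zout-type inequality for the Chow-form height (in the spirit of Philippon or Bost--Gillet--Soul\'e), which for intersection with a hyperplane reads
\[\hp(Y_{j-1}\cap H_j)\leq \hp(Y_{j-1})+\deg(Y_{j-1})\,\hp(H_j)+c(N)\deg(Y_{j-1}).\]
Each $H_j$ has coefficients in $S_{\m(S)+1}$, hence $\hp(H_j)\leq \m(S)+1+\log(N+1)$. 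Iterating $\dim X -1$ times and using $\deg Y_{j-1}\leq \deg X$ at each step, the errors combine into the claimed bound $\hp(C)\leq \hp(X)+(\dim X)(\deg X)(N+1)(\m(S)+2)$ after absorbing $c(N)+\log(N+1)+1$ into the factor $(N+1)(\m(S)+2)$.

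The main obstacle is coordinating three simultaneous requirements: smoothness \emph{and} geometric irreducibility from Bertini in one shot, a combinatorial selection procedure that forces the hyperplane coefficients into the possibly sparse set $S$ (this is where the hypothesis $\m(S)<\infty$ is essential), and an arithmetic B{\'e}zout estimate whose constants fit the clean form of the theorem. The selection step in particular relies on the Bertini open subset $U_j$ being cut out by a \emph{single} nonzero polynomial equation in the coefficients, so that its complement meets any coordinate line on which it is not identically zero in only finitely many points, after which the Northcott hypothesis delivers enough elements of $S$ at bounded height to avoid these.
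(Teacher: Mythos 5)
Your plan --- iteratively intersecting with hyperplanes whose coefficients are chosen from $S$, and controlling heights via arithmetic B\'ezout --- is essentially the paper's proof (Theorem \ref{Bertini}, Corollary \ref{control}, Propositions \ref{Remond} and \ref{control2}, Corollary \ref{curves}). Your variable-by-variable selection of coefficients from $S_{\m(S)+1}$ correctly fleshes out what the paper leaves terse in Corollary \ref{control}, and your genus bound via a generic projection to a birational plane model (geometric genus $\leq (d-1)(d-2)/2$) is a valid alternative to the paper's appeal to Castelnuovo's bound; both fall comfortably within $(\deg X)^2+\deg X$.

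There is, however, a genuine error in your height estimate. You claim a hyperplane $H_j$ with coefficients $s_0,\ldots,s_N\in S_{\m(S)+1}$ satisfies $\hp(H_j)\leq \m(S)+1+\log(N+1)$, but the Weil height of a projective point (equivalently of the corresponding linear form) is controlled by the \emph{sum} of the coordinate heights, not their maximum:
\[
h_\infty(s_0:\cdots:s_N)\leq h_\infty(1:s_0:\cdots:s_N)\leq \sum_{i=0}^{N}h_\infty(1:s_i)\leq (N+1)(\m(S)+1),
\]
and this sum bound is essentially sharp (take $s_i=1/p_i$ with distinct primes $p_i$ of size about $e^{\m(S)+1}$; then $h_\infty(s_0:\cdots:s_N)$ is close to $N(\m(S)+1)$, because each $s_i$ contributes its denominator independently). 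Converting to $h_2$ via (\ref{compar}) gives $(N+1)(\m(S)+2)$, which is exactly the factor in the stated bound. Your intended absorption of a bounded quantity $c(N)+\log(N+1)+1$ into $(N+1)(\m(S)+2)$ cannot supply the missing multiplicative factor of $N+1$ on $\m(S)+1$ when $\m(S)$ is large. Once the hyperplane height is corrected, it is cleanest to use R\'emond's refinement in Proposition \ref{Remond} (which for a single hyperplane reads $\hp(X\cap H)\leq \hp(X)+(\deg X)\,h_2(H)$ with no additive error term, unlike the Bost--Gillet--Soul\'e form you quote); the iteration then gives exactly the theorem.
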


We will now apply Theorem \ref{theo1} to the case of abelian varieties and derive some consequences. 
All number fields are assumed to lie in a fixed algebraic closure  $\overline{\mathbb{Q}}$.
For each abelian variety $A$ whose (minimal) field of definition is a number field $K$, we consider a  real-valued map $q(A,\cdot)$ with domain
the set of all  finite extensions $L$ of $K$. For example,  one could take $q(A, L)$
to be the Mordell-Weil rank of the group of rational points $A(L)$ or the dimension of $A$. 

We write $$\Qf:=\{q(A,\cdot)\,\vert\, A \text{ is an abelian variety defined over a number field}\}$$ for the family of these maps $q(A,\cdot)$.

In this article $\N=\{0,1,2,3,\ldots\}$ denotes the set of non-negative integers, $\R^+$ the set of positive real numbers, and $\R^+_0$ the set of non-negative real numbers.
All functions $c_i(\cdot)$ are real valued and non-negative.

\begin{defin}\label{admissible}
Given  maps $\czero, \cone, \ctwo: \mathbb{N}\to \mathbb{R}^+_0$ with $\ctwo$ increasing (not necessarily strictly), we say that the family of maps $\Qf$ is $(S, \czero, \cone, \ctwo)$-admissible if it satisfies the following:

For every number field $K$ and every abelian variety $A/K$ of  dimension $g\geq2$, and for every finite subset $s\subset S$, for every abelian variety $B/K(s)$, the following three properties hold:
\begin{description}
\item[(E)]  $q(A,K(s))\geq \czero(g) q(A,K)$,
\item[(P)]  $q(A\times B,K(s))\geq \cone(g) q(A,K(s)) $,
\item[\;(I)] $\vert q(A,K(s))-q(B,K(s))\vert\leq \ctwo(g)$ whenever $A$ and $B$ are $K(s)$-isogenous.

\end{description}
\end{defin}

Note that the homomorphisms between abelian varieties of dimension $g$ and defined over $K$, can be defined over a finite extension of $K$ with degree bounded above in terms of $g$ only, which is true by \cite{Sil92}, see also \cite{Rem20} for a recent optimized bound.

The following result shows that certain estimates for abelian varieties using the stable Faltings height $\hF(A)$ can be reduced to the case of jacobians.

\begin{thm} \label{machine}
Let $\czero,\cone,\ctwo,\constii,\constiii, \cthree,\cfour:\N\to \R^+_0$, and assume $\czero,\cone,\constiii, \cthree$ are positive.
Assume that the family of maps $\Qf$ is $(S, \czero, \cone, \ctwo)-$admissible.
Let $K$ be a number field, $A/K$ an abelian variety defined over $K$ of dimension $g\geq 2$, and suppose there exists
a finite set $s\subset S$ and a non-singular, geometrically irreducible curve $C\subset A$, defined over $K(s)$, and of genus $g_0$, such that
\begin{description}
\item[(i)] there exists a closed immersion $A\to \Jac(C)$, defined over the field of definition of $C\subset A$,
\item[(ii)] $g_0\leq \constii(g)$,
\item[(iii)] $\hF(\Jac(C))\leq \constiii(g)(\hF(A)+m(S)+1)$.
\end{description}
Then, if 
$$\hF(\Jac(C))\geq \cthree(g) q(\Jac(C), K(s)) - \cfour(g)$$
holds, we also have
$$\hF(A)\geq \cfive(g) q(A,K) - \csix(g),$$
where $\cfive(g)=\frac{\czero(g)\cone(g)\cthree(g)}{\constiii(g)}$ and $\csix(g)=\frac{\ctwo(\constii(g))\cthree(g)}{\constiii(g)}+\frac{\cfour(g)}{\constiii(g)}+\m(S)+1$. 
\end{thm}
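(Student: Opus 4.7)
The plan is a direct chase through the three admissibility axioms, chaining the assumed lower bound on $\hF(\Jac(C))$ through $q(\Jac(C),K(s))$ down to $q(A,K)$, and then converting back to $\hF(A)$ via hypothesis (iii).

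First, I would apply Poincaré's complete reducibility theorem to the closed immersion $A \hookrightarrow \Jac(C)$ furnished by hypothesis (i). Since $K(s)$ has characteristic zero, this produces an abelian subvariety $B \subset \Jac(C)$, defined over $K(s)$, such that the natural map $A \times B \to \Jac(C)$ is a $K(s)$-isogeny. This sets the stage to invoke properties (P) and (I) of admissibility directly over the field $K(s)$, without passing to a further extension.

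Next, I would chain the admissibility inequalities. Property (I), applied to the $K(s)$-isogenous abelian varieties $\Jac(C)$ and $A \times B$, which have dimension $g_0 \leq \constii(g)$, together with the monotonicity of $\ctwo$, yields
\[
q(\Jac(C), K(s)) \geq q(A \times B, K(s)) - \ctwo(\constii(g)).
\]
Property (P) applied to $A/K(s)$ with the auxiliary variety $B/K(s)$ produced by Poincaré reducibility gives $q(A \times B, K(s)) \geq \cone(g)\, q(A, K(s))$, and property (E) gives $q(A, K(s)) \geq \czero(g)\, q(A, K)$. Combining these with the assumed lower bound $\hF(\Jac(C)) \geq \cthree(g)\, q(\Jac(C), K(s)) - \cfour(g)$ and with hypothesis (iii) produces
\[
\constiii(g)(\hF(A) + \m(S) + 1) \geq \cthree(g)\czero(g)\cone(g)\, q(A,K) - \cthree(g)\ctwo(\constii(g)) - \cfour(g),
\]
and dividing by $\constiii(g)>0$ and rearranging yields the desired inequality with the constants $\cfive(g)$ and $\csix(g)$ given in the statement.

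The main (small) obstacle is the first step: ensuring that Poincaré reducibility produces a complement $B$ and an isogeny genuinely defined over $K(s)$, rather than over a strictly larger extension. This is standard in characteristic zero but deserves attention, since passing to a larger extension would force an extra application of property (E) and weaken the constants. After that, everything is routine bookkeeping, the only care needed being to index property (I) by the dimension $g_0$ of $\Jac(C)$ rather than $g$, which is precisely where the factor $\ctwo(\constii(g))$ enters through the hypothesis $g_0 \leq \constii(g)$ and the monotonicity of $\ctwo$.
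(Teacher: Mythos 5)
Your proof is correct and follows essentially the same route as the paper's: the paper also isolates the Poincar\'e reducibility step (as a separate lemma) to produce a $K(s)$-defined complement $B$ with $\Jac(C)$ $K(s)$-isogenous to $A\times B$, and then chains properties (I), (P), (E) in exactly the same order before applying hypothesis (iii). The only cosmetic difference is that the paper first writes property (I) with $\ctwo(g_0)$ and invokes monotonicity of $\ctwo$ at the very end, while you fold that in immediately; and the paper notes explicitly that $\dim\Jac(C)\geq\dim A\geq 2$ so that the admissibility hypothesis applies to $\Jac(C)$, a check worth including.
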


Theorem \ref{theo1}, in conjunction with work of Cadoret and Tamagawa, allows us to prove the existence of a curve $C$ with the required properties,
provided $A$ is principally polarized. We also need an additional technical assumption which can always be achieved by extending the ground field.

Let $K$ be a number field, and let $A$ be an abelian variety of dimension $g$ defined over $K$. We say $A/K$ satisfies the condition $(\AK)$ if:
\\
  
$(\AK)$\begin{tabular}{ll}
&$A/K$ is semi-stable and equipped with a projective embedding $\Theta: A\to \mathbb{P}_K^{4^{2g}-1}$,\\
      & corresponding to the $16$th power of a symmetric theta divisor, with a theta\\ 
      & structure of level $r=4$, given by the modified theta coordinates of \cite{DavPhi} \\
      &  section 3.1.
\end{tabular}
\\

For any positive integer $N$, we denote by $A[N]$ the set of all $N$-torsion points in $A(\overline{\mathbb{Q}})$. We will then denote by $K(A[N])$ the smallest field extension $F/K$ such that $A[N]\subset A(F)$. If $A/K$ is principally polarized then we can ensure $(\AK)$ by replacing the ground field  $K$ with $K(A[48])$. 
Indeed, an embedding $\Theta$ always exists (based on Mumford's construction, see for instance paragraph 2.3 in \cite{Paz12}) after replacing the base field $K$ with the extension $K(A[16])$. 
Moreover, $A$ is semi-stable over the field $K(A[N])$ whenever $N$ has at least two coprime divisors $d_1, d_2 \geq 3$ (by Raynaud's criterion from \cite{SGA7}, Proposition 4.7. See also \cite{SiZa95} for extensions of this result). Note also that for any positive integer $N$ we have 
$[K(A[N]):K] \leq N^{4g^2}$ by Lemme 4.7 page 2078 of \cite{GaRe2}.

\begin{Proposition} \label{Cexists}
There exists a map $\constiii:\N\to \R^+$ such that the following holds.
If $K$ is a number field, and $A/K$ is a principally polarized abelian variety defined over $K$ of dimension $g\geq 2$ satisfying $(\AK)$, 
then there exists a finite set $s\subset S$ and a non-singular, geometrically irreducible curve $C\subset A$ defined over $K(s)$, and of genus $g_0$, such that
\begin{description}
\item[(i)] there exists a closed immersion $A\to \Jac(C)$, defined over the field of definition of $C\subset A$,
\item[(ii)] $g_0\leq (16^gg!)^2+16^gg!$,
\item[(iii)] $\hF(\Jac(C))\leq \constiii(g)(\hF(A)+m(S)+1)$.
\end{description}
\end{Proposition}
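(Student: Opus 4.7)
The strategy is to apply Theorem \ref{theo1} to the projective model $X:=\Theta(A)\subset\mathbb{P}_K^{4^{2g}-1}$ furnished by condition $(\AK)$, and then verify the three conclusions in turn. I first bound the degree and projective height of $X$. Since $\Theta$ is given by $16$ times a symmetric theta divisor $\Theta_0$, a standard intersection computation yields $\deg X=(16\Theta_0)^g=16^g\,g!$, using the identity $\Theta_0^g=g!$ valid for a PPAV. Moreover, comparison results between projective Chow form heights in the modified theta coordinates of \cite{DavPhi} and the stable Faltings height (which is precisely why one imposes $(\AK)$) give an estimate of the form $\hp(X)\leq c(g)(\hF(A)+1)$ for some explicit $c(g)$.

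Second, applying Theorem \ref{theo1} to $X$ produces a finite subset $s\subset S$ and a non-singular, geometrically irreducible curve $C\subset X$ defined over $K(s)$, with $\deg C\leq 16^g g!$, with genus
\begin{equation*}
g_0\leq (\deg X)^2+\deg X\leq (16^g g!)^2+16^g g!,
\end{equation*}
which is conclusion (ii), and with
\begin{equation*}
\hp(C)\leq \hp(X)+g\cdot 16^g g!\cdot 4^{2g}(\m(S)+2).
\end{equation*}

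For conclusion (i), I would use the pullback map on degree-zero line bundles: the inclusion $C\hookrightarrow A$ induces a morphism $\Pic^0(A)\to \Pic^0(C)$, and upon identifying $\Pic^0(A)\cong A$ and $\Pic^0(C)\cong \Jac(C)$ via the canonical principal polarizations (defined over $K$ and $K(s)$ respectively), one obtains a homomorphism $A\to \Jac(C)$ defined over $K(s)$. This map is a closed immersion precisely when $C$ is non-degenerate in $A$, i.e., not contained in any translate of a proper abelian subvariety. To secure this non-degeneracy I would appeal to a result of Cadoret--Tamagawa on smooth linear sections of abelian varieties, possibly after enlarging $s$ by finitely many further elements of $S$.

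For conclusion (iii), I would combine the previous bounds with a height inequality of Bost--Gaudron--R\'emond type relating the stable Faltings height of $\Jac(C)$ to the projective height of $C$, schematically $\hF(\Jac(C))\leq c'(g_0)(\hp(C)+1)$. Chaining this with $\hp(C)\leq \hp(X)+g\cdot 16^g g!\cdot 4^{2g}(\m(S)+2)$ and $\hp(X)\leq c(g)(\hF(A)+1)$, and using that $g_0$ is bounded in terms of $g$ alone, yields $\hF(\Jac(C))\leq \constiii(g)(\hF(A)+\m(S)+1)$ for a suitable $\constiii$ depending only on $g$. The main obstacle I anticipate is the third step, namely producing the non-degeneracy of $C$ (and hence the closed immersion) while preserving the field of definition $K(s)$: the Cadoret--Tamagawa input is essential here, since Bertini per se does not preclude $C$ from lying in a proper abelian subvariety translate. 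A secondary subtlety is calibrating the Bost-type inequality in the precise form required, being careful about the passage from relative to stable Faltings height, for which the semi-stability clause in $(\AK)$ is designed to help.
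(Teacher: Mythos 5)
Your proposal follows the paper's proof essentially verbatim: embed via $\Theta$, apply the Bertini theorem with height control (Theorem \ref{theo1}, or more precisely Corollary \ref{curves}), use $\deg A=16^g g!$ to get (ii), and chain the inequalities $\hp(A)\leq \cseven(g)(\hF(A)+1)$ (Lemma \ref{chowfal}) and $\hF(\Jac(C))\leq \cten(g_0,\deg C, N)(\hp(C)+1)$ (Lemma \ref{tech}) with the height bound from Bertini to get (iii). The specific tools behind Lemma \ref{tech} are R\'emond's estimate (\ref{remlem}) and the Bost--David theta/Faltings comparison (\ref{thetafaltings}), which is what you call a ``Bost--Gaudron--R\'emond type inequality.''

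One imprecision in your treatment of (i): you assert that $A\to\Jac(C)$ (the dual of the Albanese composed with the principal polarisations) is a closed immersion \emph{precisely when} $C$ is non-degenerate, i.e.\ not contained in a translate of a proper abelian subvariety. Non-degeneracy only gives surjectivity of the Albanese $\Jac(C)\to A$; the dual $\check A\to\check{\Jac(C)}$ is a closed immersion if and only if the Albanese has \emph{connected} kernel, and in general surjectivity does not control $\pi_0$ of the kernel. The correct input, and the one the paper uses (via Cadoret--Tamagawa and SGA~1), is the Lefschetz-type surjectivity $\pi_1(C)\twoheadrightarrow\pi_1(A)$ for the iterated hyperplane sections; this is exactly what forces the kernel of the Albanese to be connected. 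Since you do cite Cadoret--Tamagawa as the essential ingredient, the gap is one of formulation rather than substance, but the criterion as stated is too weak. Also note the paper absorbs this point into Corollary \ref{curves}, so the closed immersion comes for free with the curve and no further enlargement of $s$ is needed.
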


Note that the curve $C$ is not necessarily semi-stable over $K(s)$, and that $\Jac(C)$ does not come automatically with a theta structure of level 4 over $K(s)$. This is why the item $(iii)$ concerns stable Faltings heights, and not heights over $K(s)$.\\

\begin{cor} \label{machinecorrollary}
Let $\czero, \cone, \ctwo, \cthree, \cfour:\N\to \R^+_0$, with $\czero,\cone,\cthree$ positive, and suppose  the family of maps $\Qf$ is $(S, \czero, \cone, \ctwo)-$admissible.
Set $\constii(g)=(16^gg!)^2+16^gg!$.
Then there exists a map $\constiii:\N\to \R^+$ such that the following holds:

If $K$ is a number field and the inequality
\begin{description}
\item[(J)] \hspace{2.6cm} $\hF(J)\geq \cthree(g) q(J, K(s)) - \cfour(g)$
\end{description}
holds for all finite $s\subset S$, for all jacobians $J/K(s)$ of dimension $g_0\leq \constii(g)$, then 
$$\hF(A)\geq \cfive(g)q(A,K) - \csix(g)$$ for all principally polarized abelian varieties $A/K$ of dimension $g\geq 2$ satisfying $(\AK)$, where $\cfive(g)$ and $\csix(g)$ are as in Theorem \ref{machine}.
\end{cor}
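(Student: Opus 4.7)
The plan is to compose Proposition \ref{Cexists} with Theorem \ref{machine}: the former produces exactly the curve $C\subset A$ that the latter requires as input, so this corollary is essentially the assembly of these two results.

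Explicitly, I would proceed as follows. Fix a principally polarized abelian variety $A/K$ of dimension $g\geq 2$ satisfying $(\AK)$. First I would apply Proposition \ref{Cexists}, which produces a map $\constiii:\N\to\R^+$ (the same map whose existence the corollary asserts), a finite subset $s\subset S$, and a non-singular, geometrically irreducible curve $C\subset A$ defined over $K(s)$ of genus $g_0\leq (16^g g!)^2+16^g g!=\constii(g)$, together with properties (i), (ii), (iii) of that proposition. Since $\Jac(C)$ is then a jacobian defined over $K(s)$ of dimension $g_0\leq \constii(g)$, I would next invoke hypothesis (J) with $J=\Jac(C)$ to get
\[
\hF(\Jac(C))\geq \cthree(g)\,q(\Jac(C),K(s))-\cfour(g).
\]
Combined with the $(S,\czero,\cone,\ctwo)$-admissibility of $\Qf$ and properties (i)--(iii) of $C$, this is precisely the input required by Theorem \ref{machine}, whose conclusion
\[
\hF(A)\geq \cfive(g)\,q(A,K)-\csix(g)
\]
is exactly what the corollary asks for, with $\cfive(g)$ and $\csix(g)$ the explicit expressions given in Theorem \ref{machine}.

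I do not expect any genuine obstacle here: the corollary is a direct composition of two previously established results, with no new estimate to be proved. The only care required is in the bookkeeping, namely checking that the value $\constii(g)=(16^g g!)^2+16^g g!$ coming out of Proposition \ref{Cexists} matches the one used to formulate hypothesis (J), and that the positivity assumptions made here on $\czero,\cone,\cthree$ are exactly those needed to apply Theorem \ref{machine}. All substantive work is upstream: the Bertini-type Theorem \ref{theo1}, its refinement to principally polarized abelian varieties in Proposition \ref{Cexists} (which relies on Cadoret--Tamagawa), and the abstract reduction machinery of Theorem \ref{machine}.
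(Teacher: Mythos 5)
Your proposal is correct and matches the argument the paper has in mind: Corollary \ref{machinecorrollary} is the direct composition of Proposition \ref{Cexists} (which supplies the curve $C$, the set $s$, and the map $\constiii$) with Theorem \ref{machine}, where hypothesis (J) applied to $J=\Jac(C)$ furnishes exactly the displayed lower bound on $\hF(\Jac(C))$ that Theorem \ref{machine} requires. The paper does not write out a separate proof, but the same two-step assembly is used explicitly in its sketch of Corollary \ref{machinecorrollaryallabvar}, so your reasoning is precisely the intended one.
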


Sometimes Zarhin's trick can be used to get rid of the assumption that $A$ be principally polarized and  the property $(\AK)$. Let us denote the dual of $A$ by $\check{A}$.
We set 
$$Z(A)=A^4\times \check{A}^4.$$
\begin{cor} \label{machinecorrollaryallabvar}
Let $\czero, \cone, \ctwo, \cthree, \cfour:\N\to \R^+_0$, with $\czero,\cone,\cthree$ positive, and suppose  the family of maps $\Qf$ is $(S, \czero, \cone, \ctwo)-$admissible.
Set $\constii(g)=(16^gg!)^2+16^gg!$.
Then there exists a map $\constiii:\N\to \R^+$ such that, with $\cfive(g)$ and $\csix(g)$ as in Theorem \ref{machine}, the following holds:
Let $A/K$ be an abelian variety of dimension $g$. 

i) If $Z(A)/K$ satisfies $(\AK)$, and  the  hypothesis (J) (with $8g$ instead of $g$) holds true,  then we have
$$\hF(A)\geq \frac{\cfive(8g)\cone(g)}{8} q(A,K) - \frac{\csix(8g)}{8}.$$ 

ii) If  the hypothesis (J) from Corollary \ref{machinecorrollary}  (with $8g$ instead of $g$) holds for all extensions $L$ with   $[L:K] \leq 48^{256 g^2}$  instead of just $K$,
then we have
$$\hF(A)\geq \frac{\cfive(8g)\cone(g)}{8} q(A,L_0) - \frac{\csix(8g)}{8},$$ 
for some  extension $L_0$ of $K$ with $[L_0:K] \leq 48^{256 g^2}$, even if $Z(A)/K$ does not satisfy $(\AK)$.
\end{cor}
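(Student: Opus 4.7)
The plan is to apply Zarhin's trick to replace $A$ by a principally polarized abelian variety $Z(A)$, apply Corollary \ref{machinecorrollary} to $Z(A)$, and then use the additivity and duality invariance of the stable Faltings height together with property (P) of the family $\Qf$ to transfer the resulting estimate back to $A$.

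By Zarhin's trick, $Z(A) = A^4 \times \check{A}^4$ is principally polarized of dimension $8g$. Since the stable Faltings height is additive under products and invariant under passage to the dual,
$$\hF(Z(A)) = 4\hF(A) + 4\hF(\check{A}) = 8\hF(A).$$
For part (i) the hypothesis is that $Z(A)/K$ already satisfies $(\AK)$, so Corollary \ref{machinecorrollary} applied with $8g$ in place of $g$ yields
$$\hF(Z(A)) \geq \cfive(8g)\, q(Z(A), K) - \csix(8g).$$
Now I write $Z(A) = A \times (A^3 \times \check{A}^4)$ and apply property (P) of Definition \ref{admissible} with the empty set $s = \emptyset$ (so that $K(s) = K$), taking $B = A^3 \times \check{A}^4$; this gives $q(Z(A), K) \geq \cone(g)\, q(A, K)$, with constant $\cone(g)$ rather than $\cone(8g)$ because (P) is indexed by the dimension of the first factor. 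Substituting $\hF(Z(A)) = 8\hF(A)$ and dividing by $8$ yields the inequality claimed in (i).

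For part (ii) I set $L_0 := K(Z(A)[48])$. Using the bound $[K(B[N]):K]\leq N^{4(\dim B)^2}$ recalled before Proposition \ref{Cexists}, applied to $B = Z(A)$ of dimension $8g$ with $N=48$, we get $[L_0:K] \leq 48^{4(8g)^2} = 48^{256 g^2}$. By the discussion just before Proposition \ref{Cexists}, over $L_0$ the variety $Z(A)$ is semi-stable (Raynaud's criterion applies since $48$ has the two coprime divisors $3$ and $16$) and admits the required theta structure of level $4$ (since $K(A[16])\subset L_0$), so $Z(A)/L_0$ satisfies $(\AK)$. The stable Faltings height is invariant under base extension, and the hypothesis (J) is granted by assumption over every extension of $K$ of degree at most $48^{256g^2}$, in particular over $L_0$. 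Hence part (i) applied with $L_0$ replacing $K$ gives the desired conclusion.

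The main thing to watch is the bookkeeping of which dimension drives which constant: $\cfive, \csix$ must be evaluated at $8g$ (the dimension of $Z(A)$), while $\cone$ is evaluated at $g$ (the dimension of the first factor in (P)); similarly the exponent $256 g^2 = 4(8g)^2$ reflects the fact that the torsion field is of $Z(A)$, not of $A$. These are the routine but error-prone verifications; once they are settled, the argument is a direct combination of Zarhin's trick, Corollary \ref{machinecorrollary}, and the definition of admissibility.
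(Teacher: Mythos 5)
Your proposal is correct and takes essentially the same route as the paper: apply Zarhin's trick to pass to the principally polarized $Z(A)$ of dimension $8g$, invoke Corollary \ref{machinecorrollary} (which is itself the combination of Theorem \ref{machine} and Proposition \ref{Cexists} that the paper cites explicitly), use $\hF(Z(A))=8\hF(A)$ together with property (P) via the decomposition $Z(A)=A\times(A^3\times\check{A}^4)$ to get $q(Z(A),K)\geq\cone(g)q(A,K)$, and for part (ii) base-change to $L_0=K(Z(A)[48])$ with the degree bound $48^{256g^2}$ before running the same argument. Your explicit bookkeeping of which constant is evaluated at $g$ versus $8g$ is a helpful clarification that the paper leaves implicit.
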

To deduce Corollary \ref{machinecorrollaryallabvar} we first note that $Z(A)$ admits a principal polarization. For the first claim 
we  combine Theorem \ref{machine} (with $\constii(g)=(16^gg!)^2+16^gg!$ and $\constiii(\cdot)$ as in Proposition \ref{Cexists}) and Proposition \ref{Cexists} with $A$ replaced by $Z(A)$. Using that $\dim Z(A)=8g$, $\hF(Z(A))=8 \hF(A)$, and that by (P) we have $q(Z(A), K)\geq \cone(g)q(A,K)$ the claim follows.
For the second part we use that by the discussion before Proposition \ref{Cexists} there exists an extension $L_0$ of $K$ such that $Z(A)/L_0$ satisfies $(\AK)$ and $[L_0:K] \leq 48^{256 g^2}$.
We replace $K$ by  $L_0$ and conclude as before. 

Let us illustrate Corollary \ref{machinecorrollaryallabvar}  by a result of the first author  \cite{Paz16} that pioneered the strategy used in this paper. 
We take 
\begin{equation*}
q(A,K)=\frac{1}{[K:\mathbb{Q}]}\log \left(N_{K/\mathbb{Q}}\Big(\prod_{\mathfrak{p} \, \mathrm{bad}\, s.s.}\mathfrak{p}\Big)\right),
\end{equation*}
where the product runs over the semi-stable
bad prime ideals of $A/K$. 
It is clear that $(P)$ holds with $\cone(g)=1$. 
Isogenous abelian varieties share the same semi-stable bad reduction primes by the N{\'e}ron-Ogg-Shafarevich
criterion, because they have the same Tate modules (see Theorem 1 page 493 of \cite{SeTa68} and
Corollary 2 page 22 of \cite{Fal86}). Hence, (I) holds with $\ctwo(g)=0$. For (E) we need to assume that
our fixed $S$ from Convention \ref{convention} is such that $K(S)/K$ has ramification uniformly bounded above all finite prime ideals of $K$ with a bound independent of $K$
(cf. Proposition \ref{NN2Trem} to construct such $S$).
If $M/K$ is a finite extension then $A/K$ has semi-stable bad reduction at $\mathfrak{p}\subset \Oseen_K$ if and only if  $A/M$ has semi-stable bad reduction at $\mathfrak{B}\subset \Oseen_M$
for each prime $\mathfrak{B}$ above $\mathfrak{p}$. Hence, with an $S$ as above, a straightforward computation shows that (E) holds.
Finally, as shown in \cite{Paz16} hypothesis (J) follows essentially from the 
arithmetic Noether's formula of \cite{MB} Th\'{e}or\`{e}me 2.5 page 496. By Corollary  \ref{machinecorrollaryallabvar} (ii)
 it follows that there exist
$\cfive:\N\to \R^+_0$ and $\csix:\N\to \R^+$ such that
\begin{equation}\label{Faltbad}
\hF(A)\geq \frac{\cfive(8g)}{8\cdot 48^{256g^2}[K:\mathbb{Q}]}\log \left(N_{K/\mathbb{Q}}\Big(\prod_{\mathfrak{p} \, \mathrm{bad}\, s.s.}\mathfrak{p}\Big)\right) -\frac{\csix(8g)}{8}.
\end{equation}

Combining (\ref{Faltbad}) with an additional argument (Lemma 3.5 of \cite{Paz16}) shows that one can take the product in (\ref{Faltbad}) even over all primes with bad reduction, 
but at the expense of  replacing the stable Faltings height $\hF(A)$ on the left hand-side with the relative Faltings height $\hF(A/K)$.  This was used in \cite{Paz16} to establish  (\ref{heightrank}).

Next let us consider another consequence of Theorem \ref{machine}. Taking $q(A,K)=\mathrm{rank}(A(K))$ we see that $\Qf$ is $(S,1,1,0)-$admissible.

Given an abelian variety $A/K$ of dimension $g\geq 1$ then  $q(A,L)$ is unbounded as $L$ runs over all finite extensions of $K$.
Let $\constii(\cdot), \constiii(\cdot)$ be given, and choose $\czero(g)=1,\cone(g)=1,\ctwo(g)=0,\cthree(g)=1$ and $\cfour(g)=0$.
Take an extension $L/K$ such that  $\hF(A)< \cfive(g) q(A,L) - \csix(g)$. 
Then apply Theorem \ref{machine} with $A/K$ replaced by $A/L$ to deduce 
the following corollary.

\begin{cor}\label{rank intro}
Let $A$ be an abelian variety over $\overline{\mathbb{Q}}$ of dimension $g\geq2$, let $\constii, \constiii:\N\to \R^+$.
Then there exists a number field $L$ such that $A$ is defined over $L$, and
for every finite set $s\subset S$, and for every non-singular, geometrically irreducible curve $C\subset A$ defined over $L(s)$, and satisfying $(i), (ii), (iii)$ from Theorem \ref{machine} (with the given maps $\constii(\cdot), 
 \constiii(\cdot)$),
we have $$\hF(\Jac(C))< \mathrm{rank}(\Jac(C)(L(s))).$$ 
\end{cor}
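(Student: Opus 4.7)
The plan is to apply Theorem \ref{machine} in its contrapositive form, with $q(A,\cdot) = \mathrm{rank}(A(\cdot))$, after enlarging the ground field to a number field $L$ over which $\hF(A)$ is strictly dominated by a fixed linear expression in $\mathrm{rank}(A(L))$.

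First, fix a number field $K_0$ over which $A$ is defined, and set $q(A, \cdot) := \mathrm{rank}(A(\cdot))$. The family $\Qf$ is $(S, 1, 1, 0)$-admissible, as observed in the paragraph preceding the corollary: property (E) follows from the inclusion $A(K) \subset A(K(s))$; property (P) from the split inclusion $A(K(s)) \hookrightarrow (A \times B)(K(s))$; and property (I) from the fact that $K(s)$-isogenies induce isomorphisms after tensoring with $\Q$, hence preserve rank. Choose $\czero(g) = \cone(g) = \cthree(g) = 1$ and $\ctwo(g) = \cfour(g) = 0$. With the given maps $\constii, \constiii$, Theorem \ref{machine} produces $\cfive(g) = 1/\constiii(g)$ and $\csix(g) = \m(S) + 1$.

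Next, I would invoke the classical fact that the Mordell-Weil rank of $A$ is unbounded as the field of definition ranges over finite extensions of $K_0$: the group $A(\overline{\Q})$ has infinite $\Z$-rank, and the field generated over $K_0$ by $n$ non-torsion $\Z$-independent points $P_1, \ldots, P_n$ is a number field $L$ with $\mathrm{rank}(A(L)) \geq n$. Hence we can choose a finite extension $L/K_0$ so that
$$
\hF(A) \;<\; \frac{\mathrm{rank}(A(L))}{\constiii(g)} - (\m(S) + 1) \;=\; \cfive(g)\,\mathrm{rank}(A(L)) - \csix(g).
$$

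Finally, suppose for contradiction that there exist a finite $s \subset S$ and a non-singular, geometrically irreducible curve $C \subset A$ defined over $L(s)$ satisfying (i), (ii), (iii) of Theorem \ref{machine} with $\hF(\Jac(C)) \geq \mathrm{rank}(\Jac(C)(L(s)))$. This is precisely the hypothesis $\hF(\Jac(C)) \geq \cthree(g)\, q(\Jac(C), L(s)) - \cfour(g)$, so Theorem \ref{machine} applied to $A$ viewed over $L$ would yield $\hF(A) \geq \cfive(g)\,\mathrm{rank}(A(L)) - \csix(g)$, contradicting the choice of $L$. Thus every such $C$ satisfies $\hF(\Jac(C)) < \mathrm{rank}(\Jac(C)(L(s)))$, as claimed. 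The only external input of substance is the unboundedness of the Mordell-Weil rank under finite extensions, which is classical; the real content of the argument is simply the bookkeeping required to ensure that the $\m(S) + 1$ term and the factor $\constiii(g)$ appearing in $\csix(g)$ and $\cfive(g)$ are absorbed into a single choice of $L$ before Theorem \ref{machine} is invoked.
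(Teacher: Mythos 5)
Your proposal is correct and follows the same route the paper takes in the short paragraph preceding the corollary: take $q(A,\cdot)=\mathrm{rank}(A(\cdot))$ so that $\Qf$ is $(S,1,1,0)$-admissible, set $\czero=\cone=\cthree=1$, $\ctwo=\cfour=0$, use unboundedness of the Mordell--Weil rank over finite extensions to pick $L$ with $\hF(A)<\cfive(g)\,\mathrm{rank}(A(L))-\csix(g)$, and then read Theorem \ref{machine} (applied to $A/L$) in the contrapositive. Your write-up merely makes explicit the admissibility verification and the resulting values $\cfive(g)=1/\constiii(g)$, $\csix(g)=\m(S)+1$, which the paper leaves implicit.
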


Let us recall that Honda conjectured in \cite{Honda} page 98 that for any abelian variety $A/K$ there exists a constant $c_A>0$ such that if $M$ is an extension of  the number field $K$ 
then
$$\mathrm{rank}(A(M))\leq c_A [M:\mathbb{Q}].$$

As a consequence of Proposition \ref{Cexists} we can reduce Honda's conjecture to the case (of a strong form) of jacobians.

\begin{cor}\label{Honda}
Let $A$ be a principally polarized abelian variety over $K$ of dimension $g\geq2$ satisfying $(\AK)$. 
Suppose that there exists a map $\ctwenty: \N\times \R\to \R^+$, increasing in both variables, and  such that  
$$\mathrm{rank}(\Jac(C)(M))\leq \ctwenty(g_0,\hF(\Jac(C))) [M:\mathbb{Q}],$$ for all finite subsets $s\subset S$, for all non-singular, geometrically irreducible curves $C\subset A$, defined over $K(s)$, 
and of genus $g_0\leq (16^gg!)^2+16^gg!$, and for all finite extensions $M/K(s)$.
Then there exist a finite subset $s_A\subset S$, and a map $\ctwentyseven:\N\times \R\times \R^+_0\to \R^+$ such that for each finite extension $M/K(s_A)$
 $$\mathrm{rank}(A(M))\leq \ctwentyseven(g,\hF(A),m(S))[M:\mathbb{Q}].$$
\end{cor}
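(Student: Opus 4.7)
The plan is to apply Proposition \ref{Cexists} once to produce a single curve that will serve for all finite extensions $M/K(s_A)$, and then exploit the closed immersion into its jacobian to transfer the rank hypothesis from $\Jac(C)$ back to $A$.

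First, by Proposition \ref{Cexists} applied to $A/K$, there is a map $\constiii:\N\to\R^+$, a finite subset $s_A\subset S$, and a non-singular geometrically irreducible curve $C\subset A$ defined over $K(s_A)$ of genus $g_0 \leq (16^g g!)^2 + 16^g g!$, together with a closed immersion $\iota : A\to \Jac(C)$ defined over $K(s_A)$, such that $\hF(\Jac(C)) \leq \constiii(g)(\hF(A)+m(S)+1)$.

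Next, fix any finite extension $M/K(s_A)$. Since $\iota$ is a closed immersion of group schemes, it is a monomorphism; in particular the induced homomorphism $\iota_M : A(M) \to \Jac(C)(M)$ is injective. Consequently
\begin{equation*}
\mathrm{rank}(A(M)) \leq \mathrm{rank}(\Jac(C)(M)).
\end{equation*}
Because $M$ is a finite extension of $K(s_A)$ and $C$ is a curve defined over $K(s_A)$ of genus at most $(16^g g!)^2 + 16^g g!$, the hypothesis (with $s=s_A$) applies and yields
\begin{equation*}
\mathrm{rank}(\Jac(C)(M)) \leq \ctwenty(g_0,\hF(\Jac(C)))\,[M:\Q].
\end{equation*}

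Finally, using that $\ctwenty$ is increasing in both arguments, together with the bound on $g_0$ and the bound on $\hF(\Jac(C))$ from Proposition \ref{Cexists}, we obtain
\begin{equation*}
\ctwenty(g_0,\hF(\Jac(C))) \leq \ctwenty\!\left((16^g g!)^2 + 16^g g!,\; \constiii(g)(\hF(A)+m(S)+1)\right).
\end{equation*}
Defining
\begin{equation*}
\ctwentyseven(g,h,t) := \ctwenty\!\left((16^g g!)^2 + 16^g g!,\; \constiii(g)(h+t+1)\right),
\end{equation*}
which depends only on $g$, $\hF(A)$, and $m(S)$ (and not on $M$), we conclude that $\mathrm{rank}(A(M)) \leq \ctwentyseven(g,\hF(A),m(S))[M:\Q]$ for every finite extension $M/K(s_A)$, as required. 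The main thing to verify is the injectivity of $\iota_M$ on $M$-rational points, which is immediate from the closed immersion statement in Proposition \ref{Cexists}; once this is in hand the rest is a bookkeeping exercise using monotonicity of $\ctwenty$.
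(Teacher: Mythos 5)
Your proof is correct and reaches the same bookkeeping as the paper, but you bypass one of the paper's intermediate steps. The paper invokes Lemma \ref{lemme clef} (Poincar\'e reducibility) to get that $\Jac(C)$ is $K(s_A)$-isogenous to $A\times B$, then argues $\mathrm{rank}(A(M))\leq \mathrm{rank}((A\times B)(M))=\mathrm{rank}(\Jac(C)(M))$, the equality relying on isogeny invariance of Mordell--Weil rank. You instead observe directly that the closed immersion $\iota:A\to\Jac(C)$ from Proposition \ref{Cexists}(i) is a monomorphism of schemes, hence injective on $M$-points, giving $\mathrm{rank}(A(M))\leq\mathrm{rank}(\Jac(C)(M))$ in one step. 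This is a genuine shortcut: it removes the dependence on Lemma \ref{lemme clef} and on isogeny invariance of rank, and the rest of the argument (applying the hypothesis with $s=s_A$, then using monotonicity of $\ctwenty$ together with bounds (ii) and (iii)) matches the paper's proof. Both approaches yield the same $\ctwentyseven$. The paper's route is arguably preferable for stylistic uniformity, since Lemma \ref{lemme clef} is also the key input in the proof of Theorem \ref{machine}, but yours is shorter and equally valid.
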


Using again Zarhin's trick, we conclude that if $\mathrm{rank}(J(M))\leq \ctwenty(g_0,\hF(J)) [M:\mathbb{Q}]$ for all finite subsets $s\subset S$, all jacobians
$J/L(s)$, for each extension $L/K$ of degree at most $48^{256 g^2}$, and of genus $g_0\leq  (16^{8g}(8g)!)^2+16^{8g}(8g)!$, and all finite extensions $M/L(s)$, then there exists a finite subset $s_A\subset S$, and a map $\ctwentyseven:\N\times \R\times \R^+_0\to \R^+$ such that for each finite extension $M/K(s_A)$
 $$\mathrm{rank}(A(M))\leq 48^{256 g^2}\ctwentyseven(8g,8\hF(A),m(S))[M:\mathbb{Q}].$$
In particular, if the strong form of Honda's conjecture (with $c_A=\ctwenty(g,\hF(A))$ as above) holds true for all jacobians,
then Honda's conjecture holds true for all abelian varieties over $\Qbar$.

A different contribution related to Honda's conjecture is in Pasten's recent paper \cite{Pas19}.

We define the main tools in Section \ref{defs}. As Theorem \ref{theo1} is expressed using the Philippon height of Chow forms, and as we would like to obtain corollaries involving the Faltings height, we also gather what is needed to translate the inequalities into this other height theory. We prove Theorem \ref{theo1} in Section \ref{Bertinisec} (for background on Bertini theorems, a good reference is \cite{Jou83}). 

As usual we proceed by intersecting the projective variety $X$ with hyperplanes but we use the fact that all their coefficients can be assumed to lie in our fixed set $S$ with finite Northcott number. 
To control the height and the degree we  use previous work of R\'emond \cite{Remond}. The control on the genus is obtained through previous work of Cadoret and Tamagawa \cite{CaTa12}.
In Section \ref{sectionProposition} we explain how to apply Theorem \ref{theo1} to abelian varieties. 
In Section \ref{exmachina} we prove Theorem \ref{machine}, and in Section \ref{ProofHonda} we prove Corollary \ref{Honda}.
Finally, in Section \ref{North}, we provide some methods to construct infinite sets with finite Northcott numbers. In particular, we show how to construct an 
infinite set $S\subset \Qbar$ with finite Northcott number such that $K(S)/K$ has uniformly bounded ramification for every number field $K$, and this bound is also uniform in $K$.

The authors thank Philipp Habegger and Ga\"el R\'emond for their helpful feedback on an earlier version of the text, and Hector Pasten for helpful comments
on Honda's conjecture. It is our pleasure to thank the referee for a very dedicated work and for helpful feedback, that led to improvements in several places.

The first author is supported by ANR-17-CE40-0012 Flair and ANR-20-CE40-0003-01 Jinvariant.

\section{Definitions and height comparisons}\label{defs}

\subsection{Height of algebraic numbers and projective points} Let $K$ be a number field and $M_K$ the set of places of $K$. We denote by $M_{K}^{0}$ the set of finite places and $M_K^{\infty}$ the set of archimedean places. For any prime number $p>0$, we normalize the archimedean absolute values by $\vert p\vert_v=p$ and the non-archimedean by $\vert p\vert_v=p^{-1}$ if $v$ divides $p$.
 We denote by $K_v$ the completion of $K$ with respect to $\vert \cdot\vert_v$. Let $d=[K:\mathbb{Q}]$. For any place $v$ of $K$, let $d_v=[K_v:\mathbb{Q}_v]$. 

Let $N\geq 1$ be an integer and $x=(x_0,\ldots ,x_N)$ a vector of algebraic numbers in $K$, not all zero. Let $\displaystyle{\Vert x\Vert_v=\max_{0\leq i\leq N}\vert x_i\vert_v}$ for $v$ a non-archimedean place of $K$ and $\displaystyle{\Vert x\Vert_v=\Big(\sum_{0\leq i\leq N} \vert x_i\vert_v^2\Big)^{1/2}}$ for $v$ an archimedean place of $K$.

For $P=(x_0:\cdots:x_N)\in\mathbb{P}^{N}_{\Qbar}$ 
we define the 
$l_2$-height and the $l_\infty$-height (or Weil height) as
\begin{alignat*}1
h_2(P)&=\sum_{v\in{M_K}}\frac{d_v}{d}\log \Vert x\Vert_v,\\
h_{\infty}(P)&=\sum_{v\in{M_K}}\frac{d_v}{d}\log \max_{0\leq i\leq N}\vert x_i\vert_v,
\end{alignat*}
where $K$ is any number field containing the coordinates $x_0,\ldots,x_N$. Dividing by the degree $d=[K:\Q]$
makes the value independent of the particular choice of $K$, and thanks to the product formula these heights
are well-defined on $\mathbb{P}^{N}_{\Qbar}$.
Moreover, they are comparable; we have 
\begin{equation}\label{compar}
h_{\infty}(P)\leq h_2(P) \leq h_{\infty}(P)+\frac{1}{2}\log(N+1).
\end{equation}
Besides the height of a projective point we also need to measure the height of an algebraic number $x$. By abuse of notation
we write 
\begin{alignat*}1
h_2(x)&=h_2(P),\\
h_{\infty}(x)&=h_\infty(P),
\end{alignat*}
where $P=(1:x)\in \mathbb{P}^{1}_{\Qbar}$.

\subsection{Height of a polynomial with algebraic coefficients} 
If $P=x_0X^N+\cdots +x_{N-1}X+x_N\in \Qbar[X]$ is a non-zero polynomial 
we define $h_2(P)=h_2(x_0:\cdots:x_N)$. More generally, if  
$$P=\sum_{i_1=0}^{N}\cdots \sum_{i_n=0}^{N}x_{{i_1}\cdots{i_n}}X_1^{i_1}\cdots X_n^{i_n}\in 
\Qbar[X_1,\ldots,X_n]\backslash\{0\}$$
then we define $$h_2(P)=h_2(\cdots: x_{{i_1}\cdots{i_n}} :\cdots).$$ 
Analogously, we define 
$$h_\infty(P)=h_\infty(\cdots: x_{{i_1}\cdots{i_n}} :\cdots).$$

\subsection{Height of a Chow form}\label{Chow}
Let us now consider $X$ a geometrically irreducible projective variety inside $\mathbb{P}^N$ defined over the number field $K$. We follow \cite{Remond} to define the height of $X$. Let $F$ be its Chow form. We define the height of $X$ by $$h_{\mathbb{P}^N}(X)=\sum_{v\in{M_K^{0}}}\frac{d_v}{d}\log \Vert F\Vert_v +\sum_{v\in{M_K^{\infty}}}\frac{d_v}{d}\Big[(\dim X +1)(\deg X)\sum_{j=1}^N \frac{1}{2j}+\int_{S_{N+1}^{\dim X +1}}\log \vert F_v\vert \tau\Big], $$ where $\tau$ is the invariant measure of total mass 1 on $S_{N+1}^{\dim X +1}$, the $(\dim X +1)$-power of the unit sphere $S_{N+1}$, and in the expression $\Vert F\Vert_v$ we identify $F$ with the vector of its coefficients. Again, this definition is independent of the choice of $K$.

When $X$ is a general closed subscheme of $\mathbb{P}^N$ defined over a number field, we define its height as the sum of the previously defined heights of its irreducible components. 
 We note that $h_{\mathbb{P}^N}(\cdot)$ is non-negative (see for instance \cite{Phi3} paragraph 1 page 346).

\subsection{Height of an abelian variety}

For the special case of abelian varieties, we will mostly use the Faltings height, and in some estimates also the theta height. We recall their definition and give a brief summary of some useful properties and comparisons of these heights.

\subsubsection{Faltings height}
Let $A$ be an abelian variety of dimension $g\geq1$ defined over a number field $K$. Let ${\mathcal O}_K$ be the ring of integers of $K$ and let $\pi\colon {\mathcal A}\longrightarrow \Spec(\mathcal{O}_K) $
be the N\'eron model of $A$ over $\Spec(\mathcal{O}_K)$. Let $\varepsilon\colon \Spec(\mathcal{O}_K)\longrightarrow {\mathcal A}$ be the zero section of $\pi$ and let
$\omega_{{\mathcal A}/\mathcal{O}_K}$ be the pullback along $\varepsilon$ of the maximal exterior power (the determinant) of the sheaf of relative differentials
$$\omega_{{\mathcal A}/\mathcal{O}_K}:=\varepsilon^{\star}\Omega^g_{{\mathcal
A}/\mathcal{O}_K}\;.$$

\noindent For any archimedean place $v$ of $K$, let $\sigma$ be an embedding of $K$ in $\mathbb{C}$ associated to $v$. The associated line bundle
$$\omega_{{\mathcal A}/\mathcal{O}_K,\sigma}=\omega_{{\mathcal A}/\mathcal{O}_K}\otimes_{{\mathcal O}_K,\sigma}\mathbb{C}\simeq H^0({\mathcal
A}_{\sigma}(\mathbb{C}),\Omega^g_{{\mathcal A}_\sigma}(\mathbb{C}))\;$$
is equipped with the $L^2$-metric $\Vert.\Vert_{v}$ given by
$$\Vert s\Vert_{v}^2=\frac{i^{g^2}}{\gamma^{g}}\int_{{\mathcal
A}_{\sigma}(\mathbb{C})}s\wedge\overline{s}\;$$
where $\gamma>0$ is a normalizing constant. In this article we choose  $\gamma=(2\pi)^2$.

The projective ${\mathcal O}_K$-module $\omega_{{\mathcal A}/\mathcal{O}_K}$ is of rank $1$ and together with the hermitian norms
$\Vert.\Vert_{v}$ at infinity it defines a hermitian line bundle 
$\overline{\omega}_{{\mathcal A}/\mathcal{O}_K}=({\omega}_{{\mathcal A}/\mathcal{O}_K}, (\Vert .\Vert_v)_{v\in{M_K^\infty}})$ over $\mathcal{O}_K$. It has a well defined Arakelov degree
$\widehat{\degr}(\overline{\omega}_{{\mathcal A}/\mathcal{O}_K})$, given by 
$$\widehat{\degr}(\overline{\omega}_{{\mathcal A}/\mathcal{O}_K})=\log\#\left({\omega}_{{\mathcal A}/\mathcal{O}_K}/{s{\mathcal
O}}_K\right)-\sum_{v\in{M_{K}^{\infty}}}d_v\log\Vert
s\Vert_{v}\;,$$
where $s$ is any non-zero section of ${\omega}_{{\mathcal A}/\mathcal{O}_K}$. The resulting number does not depend on the choice
of $s$ in view of the product formula on the number field $K$. 

The height of $A/K$ is defined as
$$\hF(A/K):=\frac{1}{[K:\mathbb{Q}]}\widehat{\degr}(\overline{\omega}_{{\mathcal
A}/\mathcal{O}_K})\;.$$
It does not depend on any choice of projective embedding of $A$. 
We emphasise that by our choice of normalization  $\gamma=(2\pi)^2$ the Faltings height $\hF(\cdot)$ is non-negative (see Remarque 3.3 in \cite{Paz19} and a detailed proof in the appendix of \cite{GaRe3} for the semi-stable case, the general case follows by property (2) below).
Faltings \cite{Falt} used the normalization  $\gamma=2$ so that $\hF(A/K)=h_{Faltings}(A/K)+\frac{g}{2}\log(2\pi^2)$.

We recall here three classical properties (see for instance \cite{Del}, in particular page 35) used in the sequel:

\begin{enumerate}
\item If $A=A_1\times A_2$ is a product of abelian varieties over $K$ then one has $\hF(A/K)=\hF(A_1/K)+\hF(A_2/K)$.
\item If $K'/K$ is a number field extension then one has $\hF(A/K')\leq \hF(A/K)$.
\item If $A/K$ is semi-stable then the height is stable by number field extension.
\end{enumerate}

\begin{defin} \label{faltings} 
The stable height of $A/K$ is defined as $\hF(A):=\hF(A/K')$ for any number field extension $K'/K$ such that $A/K'$ is semi-stable.
\end{defin}

\subsubsection{Theta height}
We refer the reader to classical work of Mumford on theta structures, recalled in some details in paragraph 2.3 of \cite{Paz12}. Let $A$ be an abelian variety over a number field $K$. 
Assume $A/K$ is given with a theta structure of level $4$. It gives in particular an explicit embedding of $K$-varieties $\Theta: A\to \mathbb{P}^{4^{2g}-1}_K$, and we define $$h_{\Theta}^{(4)}(A,\Theta):= h_2(\Theta(0_A)).$$
We note that $h_{\Theta}^{(4)}(\cdot,\cdot)$ is also a non-negative height.

\subsubsection{Useful inequalities}
We gather here some technical inequalities between these different heights. Let us start by considering $C$ a non-singular curve in $\mathbb{P}_{\Qbar}^N$ with $N\geq4$, of degree $\deg C$ and genus $g$. 
Choose a ground field $K$ such that $\Jac(C)/K$ has $(\AK)$.
By Th\'eor\`eme 1.3 and Proposition 1.1 page 760 of \cite{Remond} one has
\begin{equation}\label{remlem}
h_{\Theta}^{(4)}(\Jac(C),\Theta)\leq (2\deg C+1)^2\log(N+1)^4 \, m^{20m8^{g}}(\hp(C)+1),
\end{equation}
where $m=4g+2\deg C -2$. 

As we will also need a comparison between the Faltings height and the theta height of level $r=4$ of a principally polarized 
 abelian variety $A/K$ of dimension $g$ with $(\AK)$,
 we extract the following Bost-David comparison from \cite{Paz12}, Corollary 1.3 page 21 (due to the different normalization $\gamma=2\pi$  our height here differs by 
$g\log(2\pi)/2$ from the one in \cite{Paz12}) using the factor $7$ instead of $6$ to take care of the different normalization:

\begin{equation*}
\vert h_{\Theta}^{(4)}(A,\Theta)-\frac{1}{2} h_{F}(A)\vert \leq 7 \cdot 4^{2g}\cdot  \log(4^{2g}) \cdot\log\Big(\max\{1, h_{\Theta}^{(4)}(A,\Theta)\}+2\Big).
\end{equation*}
It follows that there exists an explicitly computable map $\cthirtytwo:\N\to \R^+$ such that
\begin{equation}\label{thetafaltings}
h_{\Theta}^{(4)}(A,\Theta)-\cthirtyone(g)\leq h_{F}(A)\leq 3h_{\Theta}^{(4)}(A,\Theta)+\cthirtyone(g).
\end{equation}

Finally, we need a Philippon height -- Faltings height comparison lemma.

\begin{lem}\label{chowfal}
Let $A$ be a principally polarized abelian variety of dimension $g$ over $\overline{\mathbb{Q}}$ given with a projective embedding $\Theta: A\to \mathbb{P}^{4^{2g}-1}$ compatible with a theta structure of level $r=4$, corresponding to the 16th power of a symmetric theta divisor. There is an explicitly computable map $\cseven:\N\to \R^+$ such that $$\hp(A)\leq \cseven(g) (\hF(A)+1),$$ where $N=16^g-1$.
\end{lem}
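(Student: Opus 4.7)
The plan is to bound $\hp(A)$ in terms of the theta height $h_{\Theta}^{(4)}(A,\Theta)$ first, and then use the Bost--David comparison (\ref{thetafaltings}) to pass from the theta height to the Faltings height.

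First, I would exploit the classical fact that under the embedding $\Theta:A\hookrightarrow \mathbb{P}^{4^{2g}-1}$ associated to the $16$th power of a symmetric theta divisor with a theta structure of level $4$, the abelian variety $A$ is cut out by quadrics (Mumford's Riemann relations) whose coefficients are explicit polynomials, of degree bounded in terms of $g$ only, in the theta null values, i.e., in the projective coordinates of $\Theta(0_A)$. It follows that the $h_2$-height of each such defining quadric is bounded above by an explicit affine function of $h_{\Theta}^{(4)}(A,\Theta)$ with coefficients depending on $g$ only.

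Next, I would apply a standard comparison between the Philippon--Chow height of a projective variety and the heights of a system of defining equations for it. Such comparisons are classical (Philippon's original work on Chow forms) and appear in a convenient form in R\'emond \cite{Remond}, where the converse direction was exploited to derive (\ref{remlem}). Combined with the fact that $\deg A = 16^g g!$ and $N+1 = 16^g$ both depend on $g$ only, this yields an inequality of the shape
\[
\hp(A) \leq C(g)\bigl(h_{\Theta}^{(4)}(A,\Theta) + 1\bigr)
\]
for an explicit map $C:\N\to \R^+$. Substituting (\ref{thetafaltings}), which gives $h_{\Theta}^{(4)}(A,\Theta) \leq \hF(A) + \cthirtyone(g)$, then produces the desired inequality $\hp(A) \leq \cseven(g)(\hF(A)+1)$ for a suitable explicit $\cseven(g)$.

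The main obstacle is the second step: extracting a clean quantitative bound of $\hp(A)$ by the heights of the defining quadrics. The difficulty is that the Philippon height is defined through a Mahler-type archimedean integral of the Chow form on a product of spheres, whereas the $h_2$-height of the equations is merely the na\"ive coefficient height. Matching the two cleanly, while keeping an explicit dependence on $g$, is where essentially all the bookkeeping lies; the non-archimedean contributions are controlled directly by Gauss's lemma, and the archimedean ones by standard Mahler-measure inequalities applied to resultants.
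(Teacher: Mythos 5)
Your route is genuinely different from the paper's, and it is worth spelling out what each buys. The paper's proof never touches defining equations at all: it invokes Proposition~3.9 of David--Philippon \cite{DavPhi}, which bounds the \emph{difference} $|\widehat{h}_{\mathbb{P}^N}(V)-h_{\mathbb{P}^N}(V)|$ for any subvariety $V\subset A$ explicitly in terms of $g$, $\dim V$, $\deg V$ and $h_\Theta^{(4)}(A,\Theta)$, where $\widehat{h}_{\mathbb{P}^N}$ is Philippon's normalized (canonical) height. Taking $V=A$ and using that $\widehat{h}_{\mathbb{P}^N}(A)=0$ (Philippon \cite{Phi}, Proposition~9(vii)), the Philippon height $\hp(A)$ is bounded directly by an affine function of $h_\Theta^{(4)}(A,\Theta)$; then (\ref{thetafaltings}) finishes exactly as you say. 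This is a two-line argument once you know $\widehat{h}(A)=0$.

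Your proposal instead goes through Mumford's Riemann relations: cut $A$ out of $\mathbb{P}^{N}$ by quadrics with coefficients polynomial in the theta nulls, control their $h_2$-heights by an affine function of $h_\Theta^{(4)}(A,\Theta)$, and then compare $\hp(A)$ with the heights of a defining system. The correct tool for the last step inside the framework of this paper is Proposition~\ref{Remond} applied with $X=\mathbb{P}^N$ (whose Philippon height is an explicit constant depending only on $N=16^g-1$), $D=2$, and $H$ the common bound on the $h_2$-heights of the chosen quadrics; since $A$ is then an irreducible component of the resulting intersection, that proposition bounds $\hp(A)$ in the required way. Note that your reference is slightly off: (\ref{remlem}) is R\'emond's Th\'eor\`eme~1.3/Proposition~1.1 (a bound on the theta height of a Jacobian in terms of the Philippon height of the curve) and is not the comparison you need here; Proposition~\ref{Remond} (R\'emond's Proposition~2.3) is.

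Your route is viable, but it shoulders all the work that the paper's choice of tool avoids. In particular, you would need to cite and verify a precise, height-explicit version of the statement that the Riemann-relation quadrics \emph{cut out} $A$ (at least set-theoretically, and with $A$ appearing as an irreducible component of the intersection) in the specific embedding used here, namely the modified theta coordinates of \cite{DavPhi} \S3.1 for a level-$4$ theta structure attached to $16\Theta$, and you would need to bound the number, degree, and heights of these quadrics explicitly in $g$ and $h_\Theta^{(4)}(A,\Theta)$. None of this is conceptually wrong, but it is exactly the bookkeeping you flag as the ``main obstacle,'' and it is sidestepped entirely by the David--Philippon normalized-height comparison. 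If you want a short proof, reach for $\widehat{h}_{\mathbb{P}^N}(A)=0$ and Proposition~3.9 of \cite{DavPhi} rather than for Chow-form/resultant/Mahler-measure estimates.
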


\begin{proof}
We use Proposition 3.9 of \cite{DavPhi} page 665, where the authors prove that for any algebraic subvariety $V\subset A$ the inequality 
$$\vert \widehat{h}_{\mathbb{P}^N}(V)-h_{\mathbb{P}^N}(V)\vert \leq \ceight(g, \dim V, \deg V, h_{\Theta}^{(4)}(A,\Theta))$$
holds, where ${h}_{\mathbb{P}^N}(V)$ is the height of the variety $V$ as defined previously in paragraph \ref{Chow} (it is the same definition as the one in \cite{DavPhi} page 644), the height $\widehat{h}_{\mathbb{P}^N}(V)$ is defined in \cite{Phi} before Proposition 9 and the quantity $\ceight(g, \dim V, \deg V, h_{\Theta}^{(4)}(A,\Theta))>0$ can be taken to be $(4^{g+1}h_{\Theta}^{(4)}(A,\Theta)+3g\log2)\cdot(\dim V+1)\cdot\deg V$. Picking $V=A$ the abelian variety we focus on, we have $\widehat{h}_{\mathbb{P}^N}(A)=0$ (see Proposition 9 item (vii) page 281 of \cite{Phi}), $\dim A=g$, $\deg A = 16^g g!$ (see \cite{Mum} page 150) and 
\begin{equation}\label{h1}
h_{\mathbb{P}^N}(A)\leq \cnine(g) (h_{\Theta}^{(4)}(A,\Theta)+1),
\end{equation}
 where $\cnine(g)>0$ only depends on the dimension of $A$, and one can take $\cnine(g)=4^{3g+1}(g!) (g+1)$. 
Plugging the estimate from (\ref{thetafaltings}) into (\ref{h1}) and using that $\hF(A)\geq 0$,  concludes the proof.
\end{proof}

\section{Bertini with height control}\label{Bertinisec}

As a first step, we state a classical Bertini Theorem, then we use a result of R\'emond to control the height of a curve drawn on a projective variety. 

\begin{thm}\label{Bertini}(Bertini's Theorem)
Let $X$ be a non-singular closed subvariety of $\mathbb{P}^N_{\overline{\mathbb{Q}}}$ with $\dim X\geq2$. There exists a hyperplane $H_0\subset \mathbb{P}^N_{\overline{\mathbb{Q}}}$ not containing $X$ and such that $X\cap H_0$ is non-singular, geometrically irreducible of dimension $\dim X -1$. Furthermore, the set of such hyperplanes forms an open dense subset $U$ of the complete linear system $\vert H_0\vert$, viewed as a projective space.
\end{thm}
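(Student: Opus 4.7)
The plan is to establish the two conclusions separately: (a) for a generic hyperplane $H$ the scheme $X \cap H$ is smooth of the expected dimension, and (b) for $\dim X \ge 2$ it is also geometrically irreducible. Both conclusions pick out a Zariski-open dense subset of the dual projective space $(\mathbb{P}^N)^{\ast} \simeq |H_0|$, and the final $U$ is the intersection of the two. Since the statement is over $\overline{\mathbb{Q}}$, which has characteristic $0$, there are no subtleties coming from wild phenomena such as inseparability.

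For part (a), I would introduce the incidence correspondence
\begin{equation*}
I = \{(x,H)\in X\times (\mathbb{P}^N)^{\ast}\,\vert\, T_x X\subset H\}.
\end{equation*}
The projection $\pi_1\colon I\to X$ is surjective, and the fibre over a point $x$ consists of the hyperplanes in $\mathbb{P}^N$ containing the fixed projective tangent space $T_x X$ of dimension $\dim X$; such hyperplanes form a linear subspace of $(\mathbb{P}^N)^{\ast}$ of dimension $N-\dim X-1$. Hence $I$ is irreducible of dimension $N-1$, and its image under $\pi_2\colon I\to (\mathbb{P}^N)^{\ast}$ is a proper closed subvariety $X^{\ast}\subsetneq(\mathbb{P}^N)^{\ast}$. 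For any hyperplane $H$ in the non-empty open complement of $X^{\ast}\cup\{H : X\subset H\}$, no tangent space of $X$ is contained in $H$, so $X\cap H$ is smooth of dimension $\dim X-1$ by the Jacobian criterion.

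For part (b), smoothness already implies that $X\cap H$ is reduced and that its irreducible components are disjoint, so it is enough to ensure that $X\cap H$ is connected. Consider the universal hyperplane section $\mathcal{X}=\{(x,H)\in X\times(\mathbb{P}^N)^{\ast}\,\vert\, x\in H\}$ together with its projection $p\colon \mathcal{X}\to(\mathbb{P}^N)^{\ast}$. Because $X$ is irreducible and $\dim X\ge 2$, a standard dimension count shows $\mathcal{X}$ is irreducible, and then Stein factorization $p=g\circ f$ with $f\colon \mathcal{X}\to Y$ having connected fibres and $g\colon Y\to(\mathbb{P}^N)^{\ast}$ finite, combined with the fact that the generic $H$ meets $X$ in a scheme whose arithmetic genus computations (via $0\to\mathcal{O}_X(-1)\to\mathcal{O}_X\to\mathcal{O}_{X\cap H}\to 0$ and projective normality after Veronese re-embedding to kill $H^1(X,\mathcal{O}_X(-1))$) give $H^0(X\cap H,\mathcal{O}_{X\cap H})=\overline{\mathbb{Q}}$, yields connectedness of $X\cap H$ for all $H$ in a dense open. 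One could alternatively invoke the Fulton--Hansen / Grothendieck connectedness theorem directly here.

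The main obstacle is the irreducibility step: the dimension count that handles smoothness is essentially formal, but ensuring connectedness of hyperplane sections genuinely uses $\dim X\ge 2$ and a cohomological (or connectedness-theorem) input that has no analogue for curves. Once both open conditions are in hand, their intersection $U$ is a non-empty open subset of $|H_0|$, which is what the theorem asserts.
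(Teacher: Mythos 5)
The paper does not prove this theorem at all: it simply cites Hartshorne, namely Theorem~II.8.18 (Bertini for smoothness) together with Remark~III.7.9.1 (which upgrades to geometric irreducibility when $\dim X\geq 2$, via the connectedness of ample divisors, Cor.~III.7.9). Your proposal instead reconstructs the underlying argument, which is a legitimate alternative, and the incidence-correspondence dimension count for smoothness is exactly the standard one and is correct.

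The connectedness step, however, is presented in a muddled way: you run the Stein factorization of the universal hyperplane section $p\colon\mathcal{X}\to(\mathbb{P}^N)^{\ast}$ \emph{and} the cohomological argument $0\to\mathcal{O}_X(-1)\to\mathcal{O}_X\to\mathcal{O}_{X\cap H}\to 0$ together, as though each needs the other, but they are two independent alternatives, each of which suffices on its own. Moreover, as written the cohomological route does not quite close: $H^1(X,\mathcal{O}_X(-1))$ need not vanish for the given embedding, and a Veronese re-embedding changes what ``hyperplane'' means. The standard repair (this is how Hartshorne~III.7.9 actually goes, using the Enriques--Severi--Zariski--Serre lemma) is to take $n\gg 0$ so that $H^1(X,\mathcal{O}_X(-n))=0$, conclude that the divisor $\{s^n=0\}$ is connected, and then observe that its support is the same as that of $X\cap H$, which gives connectedness of $X\cap H$ for \emph{every} hyperplane $H$ not containing $X$, not merely a generic one. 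The Stein factorization route also works but requires you to actually show the finite part $g\colon Y\to(\mathbb{P}^N)^{\ast}$ has degree one (e.g.\ via normality and simple connectedness of $(\mathbb{P}^N)^{\ast}$, or by exhibiting a connected special fibre), which you do not spell out. Either route can be made rigorous, and the ideas you name (including the Fulton--Hansen/Grothendieck connectedness alternative) are the right ones; but since the paper delegates this entirely to Hartshorne, the cleanest move would be to do the same rather than merge two partial sketches.
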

\begin{proof}
This is a particular case of Theorem II.8.18 of \cite{Hart} page 179, see also Remark 7.9.1 of \cite{Hart} page 245.
\end{proof}
\begin{cor}\label{control}
Let $X$ be as in Theorem \ref{Bertini}, and let $S$ be an infinite set of algebraic numbers. There exists a hyperplane $H_0$ defined with coefficients in $S$ such that $X\cap H_0$ is non-singular, geometrically irreducible, of dimension $\dim X -1$.
\end{cor}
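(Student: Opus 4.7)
The plan is to view the parameter space of hyperplanes in $\mathbb{P}^N$ as the dual projective space $(\mathbb{P}^N)^\vee$, so that choosing a hyperplane with coefficients in $S$ amounts to choosing a point in $(\mathbb{P}^N)^\vee$ all of whose homogeneous coordinates can be picked in $S$. Bertini's Theorem \ref{Bertini} tells us that the hyperplanes $H$ such that $X\cap H$ is non-singular, geometrically irreducible, of dimension $\dim X-1$ correspond to the points of a dense open subset $U\subset (\mathbb{P}^N)^\vee$. So the task reduces to showing that $U(\overline{\mathbb{Q}})$ contains a point whose coordinates lie in $S$.

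The complement of $U$ in $(\mathbb{P}^N)^\vee$ is a proper Zariski-closed subset, hence is contained in the vanishing locus of some non-zero homogeneous polynomial $F(a_0,\ldots,a_N)\in\overline{\mathbb{Q}}[a_0,\ldots,a_N]$. So the statement reduces to the following purely combinatorial fact: if $F\in\overline{\mathbb{Q}}[a_0,\ldots,a_N]$ is non-zero and $S\subset\overline{\mathbb{Q}}$ is infinite, then there exist $s_0,\ldots,s_N\in S$, not all zero, with $F(s_0,\ldots,s_N)\neq 0$.

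I would prove this lemma by induction on $N$. For $N=0$, a non-zero univariate polynomial has only finitely many roots, and since $S$ is infinite we can pick $s_0\in S$ with $F(s_0)\neq 0$ (and by passing to $S\setminus\{0\}$, which is still infinite, we can ensure $s_0\neq 0$). For the inductive step, write
\[
F(a_0,\ldots,a_N)=\sum_{j=0}^{d}G_j(a_0,\ldots,a_{N-1})\,a_N^{j},
\]
with at least one $G_j$ non-zero. By the inductive hypothesis applied to that $G_j$, we can pick $s_0,\ldots,s_{N-1}\in S$ so that $G_j(s_0,\ldots,s_{N-1})\neq 0$; then $F(s_0,\ldots,s_{N-1},a_N)$ is a non-zero univariate polynomial in $a_N$, and again the infinitude of $S$ lets us choose $s_N\in S$ avoiding its finitely many roots. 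Arranging for $(s_0,\ldots,s_N)$ not to be the zero vector is easy since $S$ is infinite.

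The main (and only) subtle point is this combinatorial/infinitude argument; once that is in place, the resulting hyperplane $H_0=\{s_0 x_0+\cdots+s_N x_N=0\}$ has coefficients in $S$ and, by construction, lies in $U$, so $X\cap H_0$ satisfies all required properties. No height or Northcott-number information is used here — infinitude of $S$ alone suffices — which is consistent with the corollary's mild hypothesis.
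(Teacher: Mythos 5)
Your proposal is correct and follows essentially the same approach as the paper: parametrize hyperplanes by the dual projective space, invoke Bertini to get a dense open $U$, and use the infinitude of $S$ to find a point of $U$ with all coordinates in $S$. The only difference is that you spell out (by induction on the number of variables) the combinatorial fact that a non-zero polynomial cannot vanish on all tuples from an infinite set, which the paper simply asserts.
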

\begin{proof}
Let $\check{\mathbb{P}}^{N}$ be the dual projective space. Consider the isomorphism  $j:\mathbb{P}^N\longrightarrow \check{\mathbb{P}}^{N}$ defined by $j([s_0:\cdots : s_N])=H_{s_0,\ldots, s_N}$, where $H_{s_0,\ldots, s_N}$ is the hyperplane defined by $s_0x_0+\cdots+s_Nx_N=0$.  We look at the set $U$ obtained in Theorem \ref{Bertini}.
The set $j^{-1}(U)$ is non-empty and open in $\mathbb{P}^{N}$.  
Let $P\in \Qbar[x_0,\ldots,x_N]$ be a non-zero homogeneous polynomial. Since $S$ is infinite there exists $[s_0:\cdots:s_N]\in \mathbb{P}^N$ with all $s_i\in S$ and $P(s_0,\ldots, s_N)\neq 0$.
Hence, $$j^{-1}(U)\cap \{[s_0:\cdots:s_N]\in{\mathbb{P}^N} \,\vert\, s_0, \ldots, s_N\in{S}\;\, \textrm{not}\; \textrm{all}\; \textrm{zero}\}\neq \emptyset,$$ 
and this implies the claim.
\end{proof}

The following result of R\'emond is the main tool for Theorem \ref{theo1}. It is a direct consequence of Proposition 2.3 page 765 of \cite{Remond}.
\begin{prop}(R\'emond)\label{Remond}
Let $X$ be a closed subscheme of $\mathbb{P}^N_{\overline{\mathbb{Q}}}$. Let $P_1, \ldots, P_s$ be homogeneous polynomials of $\overline{\mathbb{Q}}[X_0,\ldots, X_N]$ of degree at most $D$ and of height $h_2(\cdot)$ at most $H$. If $\mathcal{V}$ is the family of irreducible components of the intersection $Y$ of $X$ with the zeros of $P_1,\ldots, P_s$, then $$\sum_{V\in{\mathcal{V}}}D^{\dim V}\deg V \leq D^{\dim X} \deg X,$$ and if one denotes $d=\min\{\dim V \vert V\in{\mathcal{V}}\}$, $$\sum_{V\in{\mathcal{V}}}D^{\dim V +1}\hp(V)\leq D^{\dim X +1} \hp(X) + (\dim X - d) D^{\dim X} (\deg X) H.$$ In particular, $\deg Y\leq D^{\dim X-d}\deg X$ and $$\hp(Y)\leq D^{\dim X -d} \hp(X) +(\dim X- d)D^{\dim X -d -1}(\deg X) H.$$
\end{prop}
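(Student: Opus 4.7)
The proposition is presented as a direct consequence of Proposition 2.3 page 765 of \cite{Remond}, so my plan is to reduce to R\'emond's single-hypersurface result and then bookkeep through an induction on $s$. I would first recall the weighted arithmetic B\'ezout in its one-polynomial form: if $P$ is homogeneous of degree $\leq D$ and height $\leq H$, and $\mathcal{W}$ denotes the irreducible components of $X \cap Z(P)$, then
\[
\sum_{W \in \mathcal{W}} D^{\dim W} \deg W \leq D^{\dim X} \deg X
\quad\text{and}\quad
\sum_{W \in \mathcal{W}} D^{\dim W + 1} \hp(W) \leq D^{\dim X + 1} \hp(X) + D^{\dim X} (\deg X) H.
\]
The key feature is the $D^{\dim W}$-weight: it treats uniformly the two cases in which $P$ vanishes on $X$ (so $W = X$ appears with no dimension drop and the weights on the two sides match) and in which $P$ cuts $X$ properly (each $W$ has $\dim W = \dim X - 1$, classical B\'ezout giving $\sum \deg W \leq D \deg X$ and arithmetic B\'ezout giving $\sum \hp(W) \leq D\, \hp(X) + (\deg X) H$, which after multiplying by $D^{\dim X - 1}$ and $D^{\dim X}$ respectively matches the displayed inequalities).

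I would then argue by induction on $s$, with $s=0$ trivial. In the inductive step, set $Y' = X \cap Z(P_1, \ldots, P_{s-1})$ with irreducible components $\mathcal{V}'$ and minimum dimension $d'$. Applying the one-hypersurface inequalities to each $V' \in \mathcal{V}'$ cut by $P_s$ and summing gives
\[
\sum_{V \in \mathcal{V}} D^{\dim V} \deg V \leq \sum_{V' \in \mathcal{V}'} D^{\dim V'} \deg V' \leq D^{\dim X} \deg X,
\]
by the induction hypothesis, and, for the height,
\[
\sum_{V \in \mathcal{V}} D^{\dim V + 1} \hp(V) \leq \sum_{V' \in \mathcal{V}'} D^{\dim V' + 1} \hp(V') + H \sum_{V' \in \mathcal{V}'} D^{\dim V'} \deg V',
\]
where only those $V'$ that are cut properly by $P_s$ contribute to the $H$-term. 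Bounding the first sum on the right by the induction hypothesis at stage $s-1$, and the second by the already-proved degree inequality at stage $s-1$, leaves an accumulated height-error of at most $(\dim X - d)\, D^{\dim X}(\deg X)\, H$, because the fresh $H$-contribution is incurred exactly in those iterations where the minimum dimension of the intermediate components strictly decreases, and this can happen at most $\dim X - d$ times along the full chain of intersections.

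The ``in particular'' clauses follow by a one-line comparison: since $\dim V \geq d$ for all $V \in \mathcal{V}$, we have $D^d \deg Y \leq \sum_V D^{\dim V} \deg V$, and, using the component-additivity of $\hp(\cdot)$ from Section~\ref{Chow}, also $D^{d+1} \hp(Y) \leq \sum_V D^{\dim V + 1} \hp(V)$; dividing the two main inequalities by $D^d$ and $D^{d+1}$ respectively yields the stated bounds on $\deg Y$ and $\hp(Y)$. The main technical hurdle is the clean bookkeeping of the coefficient $(\dim X - d)$ in the height bound: one must account correctly for which components drop in dimension versus which remain (in the vanishing cases), and verify that the cumulative $H$-factor telescopes to exactly $\dim X - d$. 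This is precisely where the weighted-degree inequality at each intermediate step is essential, as it uniformly caps every fresh $H$-contribution by $D^{\dim X} \deg X$.
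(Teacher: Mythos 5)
The paper offers no proof of this proposition: it is imported verbatim (up to a slight weakening of the hypothesis on the height of the $P_j$) from Proposition 2.3 of \cite{Remond}, so your task is really to reconstruct R\'emond's argument. Your plan --- a single-hypersurface weighted arithmetic B\'ezout inequality plus induction on $s$ --- is reasonable, and the degree inequality and the two ``in particular'' clauses go through exactly as you say. The gap is in the height bookkeeping. Your key claim, that a fresh $H$-contribution is incurred \emph{exactly} in those iterations where the minimum dimension of the intermediate components strictly decreases, is false: a component of non-minimal dimension can be cut properly (and hence charged $H\,D^{\dim V'}\deg V'$) in a step where the minimum dimension does not move. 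Concretely, let $X$ be a smooth quadric surface in $\mathbb{P}^3$, let $P_1=H\cdot H''$ cut $X$ properly into the two conics $C_1=X\cap H$ and $C_2=X\cap H''$, and let $P_2=H^2$; then $P_2$ vanishes on $C_1$ but cuts $C_2$ properly into points, all of which lie on $C_1$, so $Y=C_1$, $d=1$ and $\dim X-d=1$. The induction hypothesis at stage $1$ already uses up the entire allowance $(\dim X-d)D^{\dim X}(\deg X)H$, and stage $2$ adds the further positive term $H\,D\deg C_2$, so your recursion only proves the inequality with a strictly larger constant. In general your scheme yields the coefficient $\dim X-d'+1$ (with $d'$ the minimal dimension at stage $s-1$), which exceeds $\dim X-d$ whenever the last polynomial cuts some component properly without lowering the minimum dimension.

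Two ingredients are needed to close this. First, a pruning step: intermediate components none of whose descendants survive as irreducible components of the final $Y$ (like $C_2$ above, whose intersection points are absorbed into $C_1$) must be discarded \emph{before} their $H$-cost is charged; this is legitimate because every term in the one-hypersurface inequality is non-negative, and after pruning every component that is ever cut properly has dimension at least $d+1$. Second, a dimension-stratified accounting rather than a step count: writing $T_i^{(k)}$ for the sum of $D^{k}\deg V'$ over the surviving stage-$i$ components of dimension $k$, the relation $T_{i+1}^{(k)}\leq T_i^{(k)}-U_i^{(k)}+U_i^{(k+1)}$ (where $U_i^{(k)}$ is the weighted degree of the dimension-$k$ components cut properly at step $i+1$) telescopes to $\sum_i U_i^{(k)}\leq D^{\dim X}\deg X$ for each fixed $k$; summing over the at most $\dim X-d$ admissible values $k\in\{d+1,\ldots,\dim X\}$ gives the stated constant. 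Without both of these the telescoping you invoke does not produce $(\dim X-d)$.
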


R\'emond's original version is slightly stronger, as he only requires a modified height to be bounded by $H$. The height used in the inequalities, however, is the same as the one used in the present work.

\begin{prop}\label{control2}
Let $S$ be an infinite set of algebraic numbers with finite Northcott number $\m(S)$. Let $X$ be a non-singular closed subvariety of $\mathbb{P}^N_{\overline{\mathbb{Q}}}$ with $\dim X\geq2$. Then there exists a hyperplane $H_0$ defined with coefficients in $S$ and such that $X\cap H_0$ is non-singular, geometrically irreducible with dimension $\dim X -1$, with $\deg(X\cap H_0)\leq \deg X$, and $$\hp(X\cap H_0) \leq \hp(X) +(\deg X) (N+1)(\m(S)+2).$$
\end{prop}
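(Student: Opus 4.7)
The plan is to combine Corollary \ref{control} with R\'emond's estimate (Proposition \ref{Remond}), after first restricting $S$ to its elements of small height so that the coefficients of the hyperplane can be kept under control.

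First, by the very definition of $\m(S)$, the subset $S_{\m(S)+1}=\{s\in S\,\vert\, h_\infty(s)\leq \m(S)+1\}$ is infinite. Applying Corollary \ref{control} with $S_{\m(S)+1}$ in place of $S$ produces a hyperplane $H_0$ with equation $s_0X_0+\cdots+s_NX_N=0$ and coefficients $s_i\in S_{\m(S)+1}$, such that $X\cap H_0$ is non-singular, geometrically irreducible, of dimension $\dim X-1$.

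Next I would estimate the $l_2$-height of $H_0$, viewed as a polynomial (equivalently, as the projective coefficient vector $(s_0:\cdots:s_N)$). At each place $v$ of a number field containing the $s_i$, one has $\log\max_i|s_i|_v\leq \max_i\log\max(1,|s_i|_v)\leq\sum_i\log\max(1,|s_i|_v)$; summing against the weights $d_v/d$ yields $h_\infty(s_0:\cdots:s_N)\leq\sum_{i=0}^N h_\infty(s_i)\leq (N+1)(\m(S)+1)$. Invoking (\ref{compar}) and $\tfrac{1}{2}\log(N+1)\leq \tfrac{N+1}{2}$ then gives
\[
h_2(H_0)\leq (N+1)(\m(S)+1)+\tfrac{1}{2}\log(N+1)\leq (N+1)(\m(S)+2).
\]

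Finally I would feed $P_1=H_0$ into Proposition \ref{Remond} with $s=1$, $D=1$ and $H=(N+1)(\m(S)+2)$. Since $X\cap H_0$ is irreducible of dimension $d:=\dim X-1$, one has $\dim X-d=1$, and the last two inequalities of R\'emond's proposition specialise exactly to $\deg(X\cap H_0)\leq \deg X$ and $\hp(X\cap H_0)\leq \hp(X)+(\deg X)(N+1)(\m(S)+2)$, which is the desired conclusion. The argument is essentially a packaging of existing tools; the only step requiring a little care is the coefficient-height bound in the second paragraph, where the factor $N+1$ has to be tracked correctly from the place-by-place estimate.
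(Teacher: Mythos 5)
Your proposal is correct and takes essentially the same route as the paper: restrict $S$ to its infinite subset of elements with $h_\infty\leq \m(S)+1$, invoke Corollary \ref{control}, bound $h_\infty(s_0:\cdots:s_N)\leq\sum_i h_\infty(s_i)$ place by place, pass to $h_2$ via (\ref{compar}), and feed the linear form into Proposition \ref{Remond} with $D=1$ and $d=\dim X-1$. The only cosmetic difference is that the paper obtains the coefficient bound by inserting a leading coordinate $1$ (i.e.\ $h_\infty(s_0:\cdots:s_N)\leq h_\infty(1:s_0:\cdots:s_N)\leq\sum_i h_\infty(1:s_i)$) rather than arguing pointwise at each place, but these are the same estimate.
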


\begin{proof}
First let us replace $S$ with its infinite subset of elements $s$ with $h_\infty(s)<\m(S)+1$.
By Corollary \ref{control}
we get a hyperplane $H_0$, defined by a non-zero linear form  $F_0$,
with coefficients  $s_0,\ldots,s_N \in S$. 
Thus,
$$h_\infty(F_0)=h_\infty(s_0:\cdots:s_N)\leq h_\infty(1:s_0:\cdots:s_N)\leq \sum_{i=0}^N h_\infty(1:s_i)\leq (N+1)(\m(S)+1).$$
Finally, using that $h_2(F_0)\leq h_\infty(F_0)+\frac{1}{2}\log(N+1)\leq (N+1)(\m(S)+2)$, and applying
Proposition \ref{Remond} with $H=(N+1)(\m(S)+2)$, $d=\dim X-1$, and $D=1$ yields the claim.
\end{proof}
We are now in position to prove Theorem \ref{theo1}. We will prove the following, slightly more precise, result.
\begin{cor}\label{curves} (Bertini with height control) 
Let $S$ be an infinite set of algebraic numbers with finite Northcott number $\m(S)$. Let $X$ be a non-singular closed subvariety of $\mathbb{P}^N_{\overline{\mathbb{Q}}}$ with $\dim X\geq2$. Then there exists a non-singular, geometrically irreducible curve $C$ on $X$, defined over a finite extension of the field of definition of $X$ by finitely many elements of $S$, with $\deg C\leq \deg X$, and 
$$\hp(C) \leq \hp(X) +(\dim X)(\deg X) (N+1)(\m(S)+2).$$ Moreover, the genus of $C$ may be assumed to be bounded from above by $(\deg X)^2+\deg X$ 
and if $X=A$ is a principally polarized abelian variety, one may assume in addition that there is a closed immersion $A\to \Jac(C)$ over the field of definition of $C$.
\end{cor}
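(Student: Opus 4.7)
The plan is to iterate Proposition \ref{control2} exactly $\dim X - 1$ times, cutting $X$ down by one dimension at a time using hyperplanes whose coefficients are drawn from $S$, and then to invoke the Cadoret-Tamagawa work \cite{CaTa12} to control the genus of the resulting curve together with the closed immersion property in the principally polarized abelian case.

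Set $X_0 = X$ and suppose inductively that $X_i \subset \mathbb{P}^N$ is a non-singular, geometrically irreducible subvariety of dimension $\dim X - i$ defined over a finite extension of the field of definition of $X$ by finitely many elements of $S$, with $\deg X_i \leq \deg X$ and $\hp(X_i) \leq \hp(X) + i(\deg X)(N+1)(\m(S)+2)$. As long as $\dim X_i \geq 2$, Proposition \ref{control2} applied to $X_i$ produces a hyperplane $H_i$ with coefficients in $S$ such that $X_{i+1} := X_i \cap H_i$ is again non-singular and geometrically irreducible, with $\deg X_{i+1} \leq \deg X_i \leq \deg X$ and $\hp(X_{i+1}) \leq \hp(X_i) + (\deg X)(N+1)(\m(S)+2)$. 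Removing finitely many previously-used coefficients from $S$ leaves its Northcott number unchanged, so the iteration remains valid at each step. After $\dim X - 1$ iterations the variety $C := X_{\dim X - 1}$ is a non-singular geometrically irreducible curve with the asserted degree and height bounds; its field of definition is the compositum of that of $X$ with the field generated by the finitely many coefficients of the $H_i$, which is an extension of the required shape.

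For the genus bound and, in the case $X = A$ a principally polarized abelian variety, the closed immersion $A \to \Jac(C)$, we appeal to \cite{CaTa12}: the set of $(\dim X - 1)$-tuples of hyperplanes in $\mathbb{P}^N$ whose iterated intersection with $X$ is a smooth curve of genus at most $(\deg X)^2 + \deg X$, and (in the abelian case) such that the morphism $A \to \Jac(C)$ obtained from the inclusion $C \hookrightarrow A$, the universal property of the Jacobian, and the principal polarization identifying $A \simeq \check{A}$ is a closed immersion, contains a non-empty Zariski-open subset of the parameter space. Since finite intersections of non-empty Zariski-open subsets remain non-empty and Zariski-open, the locus described by the Bertini non-singularity conditions, the Cadoret--Tamagawa genus condition, and the closed-immersion condition remains non-empty and open. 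The argument used in Corollary \ref{control}---that no non-zero polynomial can vanish identically on tuples with entries in an infinite set---then guarantees we can pick each $H_i$ with coefficients in $S$ inside this combined open locus.

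The main obstacle will be applying \cite{CaTa12} carefully: one must verify that the combined genericity conditions---non-singularity at each stage, controlled genus of the final curve, and (in the abelian case) the Jacobian closed immersion---form an open condition on the parameter space that is compatible with our simultaneous requirement of $S$-rationality of the hyperplane coefficients. This is exactly where the axiomatic infinite-with-finite-Northcott-number setup pays off: it provides sufficient flexibility to realize $S$-points inside an arbitrary Zariski-open subset defined over $\overline{\mathbb{Q}}$, allowing the height control of Proposition \ref{control2} and the geometric control of \cite{CaTa12} to cohabit.
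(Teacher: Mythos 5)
Your inductive application of Proposition~\ref{control2} to obtain the degree and height bounds matches the paper's first step. The gap is in how you handle the genus bound and the closed immersion, which you treat as \emph{additional Zariski-open genericity conditions} on the tuple $(H_1,\ldots,H_{\dim X-1})$ to be intersected with the Bertini non-singularity loci. This is not how the paper argues, and the difference matters: the paper observes that both properties hold \emph{automatically} for any curve $C$ produced by the iterated Bertini construction. The genus bound is a consequence of Castelnuovo's criterion (ACGH p.~116, cf.\ Remark~2.1 of Cadoret--Tamagawa): a non-singular curve of degree $\deg C\leq\deg X$ in $\mathbb{P}^N$ has genus bounded purely in terms of its degree and $N$, with no genericity involved. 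Likewise the closed immersion $A\to\Jac(C)$ is obtained from the surjectivity of the induced maps on \'etale fundamental groups $\pi_1(A\cap H_1)\to\pi_1(A)$, iterated $g-1$ times; this is a Lefschetz-type statement (Lemme~X.2.10 of SGA~1, recalled in Lemma~4.1 of \cite{CaTa12}) that holds for \emph{every} choice of Bertini-admissible hyperplanes, not merely a generic one. The paper explicitly notes the closed immersion is obtained ``independently of the choice of hyperplanes.''

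Because you instead posit a combined open locus in the parameter space of $(\dim X-1)$-tuples, you are forced to reconcile that condition with the one-hyperplane-at-a-time induction that delivers the height bound. You flag this yourself as ``the main obstacle,'' but the resolution you offer---``the axiomatic infinite-with-finite-Northcott-number setup provides sufficient flexibility''---does not address the actual difficulty: at step $i$ of the induction the set of admissible $H_i$ depends on the earlier choices, and it is not clear that the set of $H_i$ for which some continuation $(H_{i+1},\ldots,H_{\dim X-1})$ lands in the claimed tuple-wise open set is itself open and non-empty. Projections of open sets need not be open, so this requires an argument you do not give. The cleanest fix is to adopt the paper's viewpoint: drop the genericity framing entirely and cite Castelnuovo for the genus and the fundamental-group surjectivity (plus duality and the principal polarizations on both $A$ and $\Jac(C)$) for the closed immersion, both of which then follow for the curve $C$ you have already constructed.
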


\begin{proof}
We apply Proposition \ref{control2} to the successive intersections $X\cap H_1\cap\cdots \cap H_i$, where $i\geq1$ is an integer. We reach dimension $1$ in $\dim X-1$ steps, the curve $C$ will be $X\cap H_1\cap\ldots\cap H_{g-1}$ if $g=\dim X$. The control on the genus of $C$ is based on Castelnuovo's criterion of \cite{ACGH} page 116 (see also  Remark 2.1 \cite{CaTa12}). 

In the case where $X$ is a principally polarized abelian variety $A$ and both $A$ and $C$ are defined over an infinite field $K$ of characteristic zero, the closed immersion $A\to \Jac(C)$ is obtained from studying the fundamental groups of the successive intersections (independently of the choice of hyperplanes). The closed immersion $A\cap H_1\to A$ induces a surjective morphism between \'etale fundamental groups $\pi_1(A\cap H_1)\to \pi_1(A)$. Iterating $g-1$ times, the closed immersion $C\to A$ induces a surjective homomorphism $\pi_1(C)\to \pi_1(A)$ over $\mathrm{Gal}(\overline{K}/K)$.

This implies that the Albanese morphism $\Jac(C)\to A$ is surjective with connected kernel. Hence the dual morphism $\check{A}\to\check{\Jac(C)}$ is a closed immersion. As both $A$ and $\Jac(C)$ are principally polarized, we have a closed immersion $A\to \Jac(C)$.

For more details see Lemme X.2.10 of \cite{SGA}, recalled in Lemma 4.1 of \cite{CaTa12}, and the arguments detailed in the last section of \cite{CaTa12}. The only difference with Theorem 1.2 of \cite{CaTa12} is that the hyperplanes we chose have bounded height.
\end{proof}

\section{Theorem \ref{theo1}  applied to abelian varieties}\label{sectionProposition}

We turn to an application of Theorem \ref{theo1} to abelian varieties.
In particular, we prove Proposition \ref{Cexists}.

\begin{lem}\label{tech}
Suppose $N\geq 4$. There exists a map $\cten: \N^3\to \R^+$ such that the following holds.
For any non-singular  geometrically irreducible curve $C$ in $\mathbb{P}^N$ of genus $g_0$, 
we have
\begin{equation}\label{h2}
\hF(\Jac(C))\leq \cten(g_0,\deg C,N)(h_{\mathbb{P}^N}(C)+1).
\end{equation}
\end{lem}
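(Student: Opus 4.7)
The plan is to chain together the two comparison inequalities already established in Section \ref{defs}, namely (\ref{remlem}) and (\ref{thetafaltings}), after a suitable base change to a field over which $\Jac(C)$ satisfies $(\AK)$.

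First I would choose a number field $K_0$ of definition for the curve $C\subset \mathbb{P}^N$, so that $\Jac(C)$ is automatically defined over $K_0$ and inherits the canonical principal polarization. To bring $\Jac(C)$ into a setting where (\ref{remlem}) applies, I would replace $K_0$ by $K := K_0(\Jac(C)[48])$. As recorded in the discussion preceding Proposition \ref{Cexists}, this ensures that $\Jac(C)/K$ is semi-stable and is equipped with a theta structure of level $4$, so that $(\AK)$ holds for $\Jac(C)/K$. The degree $[K:K_0]\leq 48^{4g_0^2}$ depends only on $g_0$, and crucially $h_{\mathbb{P}^N}(C)$ is intrinsic and does not change under base change (it is computed from the Chow form of $C$, which is determined up to scalar by $C\subset \mathbb{P}^N_{\Qbar}$, and the product formula guarantees independence of the containing number field).

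With $\Jac(C)/K$ satisfying $(\AK)$, inequality (\ref{remlem}) of Section \ref{defs} immediately yields
\begin{equation*}
h_{\Theta}^{(4)}(\Jac(C),\Theta)\leq (2\deg C+1)^2\log(N+1)^4 \, m^{20m\cdot 8^{g_0}}\bigl(\hp(C)+1\bigr),
\end{equation*}
with $m=4g_0+2\deg C-2$. The right-hand side is of the form $c(g_0,\deg C,N)(\hp(C)+1)$ with an explicit function $c$. Next I would invoke the Bost--David type comparison (\ref{thetafaltings}) to pass from the level $4$ theta height to the Faltings height, obtaining
\begin{equation*}
h_F(\Jac(C)/K)\leq 3h_{\Theta}^{(4)}(\Jac(C),\Theta)+\cthirtyone(g_0).
\end{equation*}
Finally, because $\Jac(C)/K$ is semi-stable, property (3) of the Faltings height recalled in Section \ref{defs} gives $h_F(\Jac(C)/K)=\hF(\Jac(C))$ (the stable Faltings height). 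Combining the three displayed inequalities and absorbing the additive constants into the multiplicative factor (using that $\hp(C)\geq 0$), one produces the desired $\cten(g_0,\deg C, N)$.

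There is essentially no serious obstacle here beyond organizing the base change correctly: the only delicate point is to ensure that the curve $C\subset \mathbb{P}^N$ and the abelian variety $\Jac(C)$ can be treated over a common field on which both (\ref{remlem}) and (\ref{thetafaltings}) are available, and that the Chow form height appearing on the right of (\ref{remlem}) is genuinely the same quantity as the $\hp(C)$ in the statement of the lemma. Both are handled by the observation that $\hp(\cdot)$ is defined independently of the number field, together with the semi-stability of $\Jac(C)/K(\Jac(C)[48])$ which stabilises the Faltings height on the left-hand side.
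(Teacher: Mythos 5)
Your argument is essentially identical to the paper's proof: both pass to an extension over which $\Jac(C)$ satisfies $(\AK)$, apply the R\'emond inequality (\ref{remlem}) to bound the theta height by a function of $(g_0,\deg C,N)$ times $\hp(C)+1$, and then use the Bost--David comparison (\ref{thetafaltings}) together with $\hp(C)\geq 0$ to replace the theta height by the stable Faltings height. You merely make explicit the choice $K=K_0(\Jac(C)[48])$ and the base-change invariance of $\hp(C)$, both of which the paper leaves implicit in the phrase ``After an extension of the base field we can assume that $\Jac(C)$ satisfies $(\AK)$.''
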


\begin{proof}
After an extension of the base field we can assume that $\Jac(C)$ satisfies $(\AK)$. We conclude from inequality (\ref{remlem}) that
$h_{\Theta}^{(4)}(\Jac(C),\Theta)\leq \ctwentyeight(g_0,\deg C,N)(\hp(C)+1)$ where $\ctwentyeight(g_0,\deg C,N)>0$.
Using (\ref{thetafaltings}) with $A=\Jac(C)$, and the fact that $\hp(C)\geq 0$ yields the claim.
\end{proof}

Next we prove Proposition \ref{Cexists}. For the convenience of the reader we recall the statement. 
\begin{Proposition}\label{corona}
Let $S\subset \Qbar$ be an infinite set  with finite Northcott number $m(S)$.
Then there exists a map $\constiii:\N\to \R^+$ such that the following holds.
If $K$ is a number field, and $A/K$ is a principally polarized abelian variety defined over $K$ of dimension $g\geq 2$, and satisfying $(\AK)$, 
then there exists a finite set $s\subset S$ and a non-singular, geometrically irreducible curve $C\subset A$ defined over $K(s)$, and of genus $g_0$, such that
\begin{description}
\item[(i)] there exists a closed immersion $A\to \Jac(C)$, defined over the field of definition of $C\subset A$,
\item[(ii)] $g_0\leq (16^gg!)^2+16^gg!$,
\item[(iii)] $\hF(\Jac(C))\leq \constiii(g)(\hF(A)+m(S)+1)$.
\end{description}
\end{Proposition}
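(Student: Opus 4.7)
The plan is to apply the Bertini-with-height-control statement (Corollary \ref{curves}) to the embedded abelian variety coming from $(\AK)$, and then translate the resulting bound on the Philippon height of the curve into a bound on the Faltings height of its jacobian via Lemma \ref{tech} and Lemma \ref{chowfal}.

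First, since $A/K$ satisfies $(\AK)$, we have the projective embedding $\Theta\colon A\hookrightarrow \mathbb{P}^N_K$ with $N=4^{2g}-1=16^g-1\geq 4$ (as $g\geq 2$), and the degree of the image is $\deg A=16^g g!$. Because $A$ is principally polarized and non-singular of dimension $g\geq 2$, we can apply Corollary \ref{curves} to $X=A$. This produces a finite subset $s\subset S$ and a non-singular, geometrically irreducible curve $C\subset A$ defined over $K(s)$ satisfying
\begin{align*}
\deg C &\leq \deg A=16^g g!,\\
g(C) &\leq (\deg A)^2+\deg A=(16^g g!)^2+16^g g!,\\
\hp(C) &\leq \hp(A)+(\dim A)(\deg A)(N+1)(\m(S)+2),
\end{align*}
together with a closed immersion $A\hookrightarrow \Jac(C)$ over the field of definition of $C\subset A$. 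This already gives (i) and (ii).

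For (iii), set $g_0=g(C)$. Apply Lemma \ref{tech} to $C\subset \mathbb{P}^N$ to obtain
\[
\hF(\Jac(C))\leq \cten(g_0,\deg C,N)\bigl(\hp(C)+1\bigr).
\]
Since $g_0$, $\deg C$ and $N$ are each bounded above by an explicit function of $g$ alone, the factor $\cten(g_0,\deg C,N)$ can be absorbed into a new function $c(g)$ depending only on $g$. Next, to bound $\hp(C)$ in terms of $\hF(A)$, apply Lemma \ref{chowfal} to the principally polarized $A$ with its level-$4$ theta embedding, giving $\hp(A)\leq \cseven(g)(\hF(A)+1)$. Combined with the Bertini bound above this yields
\[
\hp(C)\leq \cseven(g)(\hF(A)+1)+g\cdot 16^g g!\cdot 16^g\bigl(\m(S)+2\bigr),
\]
so both terms are dominated by a function of $g$ times $(\hF(A)+\m(S)+1)$. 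Plugging this back into the estimate for $\hF(\Jac(C))$ produces the desired inequality $\hF(\Jac(C))\leq \constiii(g)(\hF(A)+\m(S)+1)$ for a suitable $\constiii(g)$.

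The only genuine work is the application of Corollary \ref{curves}, which has already been proved; the remainder is an exercise in collecting the comparison estimates between the Philippon, theta and Faltings heights from Section \ref{defs} and absorbing all quantities that depend only on $g$ into a single constant $\constiii(g)$. In particular there is no real obstacle here: the hardest conceptual step, namely producing a curve on $A$ with an embedding $A\hookrightarrow \Jac(C)$ and controlled Philippon height, is exactly what Corollary \ref{curves} provides.
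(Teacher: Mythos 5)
Your proposal is correct and follows essentially the same route as the paper: apply Corollary \ref{curves} to the level-$4$ theta embedding from $(\AK)$ to obtain the curve with a controlled Philippon height and the closed immersion $A\hookrightarrow \Jac(C)$, then combine Lemma \ref{tech} and Lemma \ref{chowfal} (using that $g_0$, $\deg C$, and $N$ are bounded by explicit functions of $g$, and that $\hF(A)\geq 0$ and $\m(S)\geq 0$) to produce $\constiii(g)$.
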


\begin{proof}

 By assumption we can embed our abelian variety $A$  via $\Theta: A\to \mathbb{P}_K^{N}$ (compatible with a theta structure of level $r=4$, corresponding to the 16th power of a symmetric theta divisor),
where $N=4^{2g}-1$.
By Corollary \ref{curves}  there exists a non-singular geometrically irreducible curve $C$ on $A\subset \mathbb{P}^{N}$ defined over a finite extension $K'/K$ obtained by adjoining elements of $S$, and such that 
\begin{equation}\label{g_0}
\hp(C) \leq \hp(A) +g(\deg A)(N+1)(\m(S)+2),
\end{equation}
$\deg C\leq \deg A$, and with the genus $g_0$ of $C$ bounded from above by $(\deg A)^2+\deg A$, and such that there is a closed immersion $A\to \Jac(C)$, defined over the field of definition of $C\subset A$. 
Since $\deg A=16^g g!$, we get the first two properties. Next we prove (iii).
Because $A$ is principally polarized, and by Lemma \ref{chowfal}, one has 
\begin{equation}\label{chowfaltings}
\hp(A)\leq \cseven(g)(\hF(A)+1).
\end{equation}

By Lemma \ref{tech} one has 
\begin{equation}\label{curve}
\hF(\Jac(C))\leq \cten(g_0,\deg C,N)(\hp(C)+1).
\end{equation}

Using successively (\ref{curve}), (\ref{g_0}), and (\ref{chowfaltings}), this gives  
\begin{equation*}
\hF(\Jac(C))\leq \cthirteen(g,g_0,\deg C,N)\hF(A)+\cfourteen(g,g_0,\m(S), \deg A, \deg C, N),
\end{equation*}
with the quantities 
\begin{alignat*}1
\cthirteen(g,g_0,\deg C,N)&=\cseven(g)\cten(g_0,\deg C,N),\\
 \cfourteen(g,g_0,\m(S),\deg A,\deg C,N)&=\cten(g_0,\deg C,N)\Big(\cseven(g)+1\\
 &\phantom{=}\quad g(\deg A) (N+1)(\m(S)+2))\Big).
 \end{alignat*}
Finally, note that $g_0\leq (16^gg!)^2+16^gg!$, $\deg C\leq \deg A=16^g g!$, and $N=16^g-1$, and since $\cten(\cdot,\cdot,\cdot)$ can be assumed to be increasing in each variable,
we get the required map $\constiii:\N\to \R^+$ as claimed.
\end{proof}

\section{Proof of Theorem \ref{machine}}\label{exmachina}

Let us start by the following lemma.
\begin{lem}\label{lemme clef}
Let $A$ be an abelian variety of dimension $g$ over a number field $K$, and let $C\subset A$
be a non-singular geometrically irreducible algebraic curve of genus $g_0$ over a number field extension $K'$ of $K$ such that there exists a closed immersion $A\to \Jac(C)$ defined 
over $K'$. Then there exists 
an abelian variety $B$ defined over $K'$ such that $\Jac(C)$ is $K'$-isogenous to $A\times B$.
\end{lem}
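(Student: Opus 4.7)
The plan is to invoke Poincar\'e's complete reducibility theorem, applied to the abelian subvariety $A$ of $\Jac(C)$ over the field $K'$. The theorem produces an abelian subvariety $B\subset \Jac(C)$ defined over $K'$ for which the addition morphism $\mu:A\times B\to \Jac(C)$, $(a,b)\mapsto a+b$, is a $K'$-isogeny, which is exactly the statement of the lemma.

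To make this concrete I would construct $B$ via the canonical principal polarization of the Jacobian. Let $\lambda_C:\Jac(C)\to\widehat{\Jac(C)}$ be the principal polarization coming from the theta divisor (defined over $K'$), and let $\hat{i}:\widehat{\Jac(C)}\to\widehat{A}$ be the dual of the closed immersion $i:A\hookrightarrow \Jac(C)$, corresponding to restriction of degree-zero line bundles. Set $\phi:=\hat{i}\circ\lambda_C$ and take $B$ to be the identity component of $\ker\phi$, which is an abelian subvariety of $\Jac(C)$ defined over $K'$ since $\phi$ is a $K'$-morphism.

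The verification of the isogeny property then splits into a dimension count and a finiteness check. First, $\hat{i}$ is surjective and $\lambda_C$ is an isogeny, so $\phi$ is surjective and $\dim B=\dim \Jac(C)-\dim A$; thus the source and target of $\mu$ have the same dimension. Second, the kernel of $\mu$ is isomorphic to $A\cap B$, and $A\cap B\subset\ker(\phi\circ i)=\ker(\hat{i}\circ\lambda_C\circ i)$. A short computation with the definition $\lambda_L(x)=t_x^*L\otimes L^{-1}$ identifies the composition $\hat{i}\circ\lambda_C\circ i$ with the polarization of $A$ associated to the pullback of the theta line bundle. Since the theta bundle is ample, so is its pullback under a closed immersion, and therefore this composition is an isogeny with finite kernel; hence $A\cap B$ is finite and $\mu$ is an isogeny. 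The only subtle point, and the one requiring vigilance, is the rationality over $K'$: this is automatic because the theta polarization of a Jacobian, the dual abelian variety, and the identity component of a kernel are all inherited by the base field over which the curve is defined. This is precisely what distinguishes the arithmetic version of Poincar\'e's theorem from its geometric counterpart over $\overline{K'}$.
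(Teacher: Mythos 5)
Your proof is correct and takes essentially the same route as the paper: both view $A$ as an abelian subvariety of $\Jac(C)$ over $K'$ and invoke Poincar\'e's reducibility theorem. The only cosmetic difference is that the paper takes $B$ to be the quotient $\Jac(C)/A$, whereas you construct $B$ as the complementary abelian subvariety cut out by the theta polarization (which is just the standard proof of Poincar\'e reducibility unpacked); the two choices of $B$ are isogenous and yield the same conclusion.
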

\begin{proof}
By abuse of notation, we may view $A$,  via the closed immersion, as an abelian subvariety of $\Jac(C)$ defined over $K'$.
From  Poincar\'e's Reducibility Theorem we get that  $\Jac(C)$ is $K'$-isogenous to $A\times B$, where $B$ is the quotient of $\Jac(C)$ by the image of $A$.
\end{proof}

We are now ready to prove Theorem \ref{machine}.

\begin{proof}
By hypothesis there exists
a finite set $s\subset S$ and a non-singular, geometrically irreducible curve $C\subset A$, defined over $K(s)$, and of genus $g_0$, such that
\begin{description}
\item[(i)] there exists a closed immersion $A\to \Jac(C)$ defined over the field of definition of $C\subset A$,
\item[(ii)] $g_0\leq \constii(g)$,
\item[(iii)] $\hF(\Jac(C))\leq \constiii(g)(\hF(A)+m(S)+1)$.
\end{description}
By Lemma \ref{lemme clef}, we know that $\Jac(C)$ is $K(s)$-isogenous to $A\times B$ for some abelian
variety $B$ over $K(s)$. 
By hypothesis we have
\begin{equation}\label{HF-ineq}
\hF(\Jac(C))\geq \cthree(g) q(\Jac(C),K(s))-\cfour(g).
\end{equation}
 
Next note that  $\dim(\Jac(C))\geq \dim A\geq 2$, and hence by property (I)

\begin{equation*}
q(\Jac(C),K(s))\geq q(A\times B,K(s))-\ctwo(g_0),
\end{equation*}
and by property (P)
\begin{equation*}
q(A\times B,K(s))\geq \cone(g)q(A,K(s)),
\end{equation*}
and finally by property (E)
\begin{equation*}
q(A,K(s))\geq \czero(g)q(A,K).
\end{equation*}
Plugging the last three inequalities into (\ref{HF-ineq}) yields
\begin{equation*}
\hF(\Jac(C))\geq \czero(g)\cone(g)\cthree(g) q(A,K)-\ctwo(g_0)\cthree(g)-\cfour(g).
\end{equation*}

Finally, we apply hypothesis $(iii)$ and use that $\ctwo(\cdot)$ is increasing and $g_0\leq \constii(g)$ to conclude
\begin{equation*}
\hF(A)\geq \frac{\czero(g)\cone(g)\cthree(g)}{\constiii(g)} q(A,K)-\frac{\ctwo(\constii(g))\cthree(g)}{\constiii(g)}-\frac{\cfour(g)}{\constiii(g)}-\m(S)-1.
\end{equation*}
\end{proof}

\section{Proof of Corollary \ref{Honda}}\label{ProofHonda}
We now turn to the proof of Corollary \ref{Honda}.

\begin{proof}

By Proposition \ref{Cexists} there exists a finite subset $s_A\subset S$ and a  non-singular geometrically irreducible curve $C\subset A$,
defined over $K(s_A)$, and of genus $g_0$, satisfying (i), (ii) and (iii). By  Lemma \ref{lemme clef}  we have that $\Jac(C)$ is $K(s_A)$-isogenous to $A\times B$
for some abelian variety $B$ defined over $K(s_A)$.
Let $M/K(s_A)$ be a finite extension. Then, with $\constii(g)=(16^gg!)^2+16^gg!$, we have
\begin{alignat*}2
\mathrm{rank}(A(M))&\leq \mathrm{rank}((A\times B)(M))&\\
&= \mathrm{rank}(\Jac(C)(M))&\\
&\leq \ctwenty(g_0,\hF(\Jac(C)))[M:\mathbb{Q}] & (\text{by hypothesis})\\
&\leq \ctwenty(\constii(g),\hF(\Jac(C)))[M:\mathbb{Q}] &(\text{by (ii)}) \\
&\leq \ctwenty(\constii(g),\constiii(g)(\hF(A)+m(S)+1))[M:\mathbb{Q}]  &(\text{by (iii)}).
\end{alignat*}
\end{proof}

\section{Sets and field extensions with finite Northcott number}\label{North}

Recall Definition \ref{Northcott} of the Northcott number. 
In some applications of Theorem \ref{theo1} and Theorem \ref{machine} it is essential that the curve $C$ is defined
over an extension $K(s)/K$ with uniformly bounded (or otherwise prescribed) ramification, with a bound independent of $K$. Hence, we need to construct an infinite set $S\subset \Qbar$
with finite Northcott number and such that $K(S)/K$ has uniformly bounded ramification, i.e., the ramification indices
$e(\mathfrak{B}/\mathfrak{B}\cap K)$ are uniformly bounded as $\mathfrak{B}$ runs over the finite prime ideals of  all 
finite extensions of $K$ contained in $K(S)$. It is also required that this bound is independent of $K$. 
Here we show that such a set $S$ exists. 
We also recall two other methods to produce infinite sets with finite Northcott number. 

\begin{lem}\label{intgen}
Let $L$ be a number field of degree $d$.
Then there exists an integral number $\alpha$ in $L$ with $L=\mathbb{Q}(\alpha)$ and
\begin{alignat*}1
h_\infty(\alpha)\leq \frac{1}{d}\log|\Delta_L|.
\end{alignat*}
\end{lem}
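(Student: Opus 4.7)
My plan is to use Minkowski's first theorem in the geometry of numbers. Recall the Minkowski embedding $\Sigma: L \hookrightarrow L_{\mathbb{R}} := L \otimes_{\mathbb{Q}} \mathbb{R} \simeq \mathbb{R}^{r_1} \times \mathbb{C}^{r_2}$, under which $\Sigma(\mathcal{O}_L)$ is a rank-$d$ lattice of covolume $2^{-r_2}\sqrt{|\Delta_L|}$. The strategy is to distinguish one archimedean place $v_1$ of $L$ with local degree $d_{v_1}\in\{1,2\}$ and build a symmetric convex body with one ``large'' coordinate at $v_1$ and ``small'' bounds elsewhere, producing a primitive algebraic integer $\alpha\in\mathcal O_L$ whose $v_1$-conjugate strictly dominates all others in absolute value. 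The case $d=1$ is trivial (take $\alpha = 1$); below I assume $d\geq2$.

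Concretely, I would consider
\[
B_T = \{ x \in L_{\mathbb{R}} : |x|_{v_1} \leq T,\ |x|_v \leq 1/2 \text{ for all } v\mid\infty,\ v \neq v_1\},
\]
whose Lebesgue volume is $2^{r_1}\pi^{r_2}\,T^{d_{v_1}}\,(1/2)^{d-d_{v_1}}$. Choosing $T$ so that $\mathrm{vol}(B_T)$ hits the Minkowski threshold $2^{d-r_2}\sqrt{|\Delta_L|}$, i.e.\ $T^{d_{v_1}} = 2^{d-d_{v_1}}(2/\pi)^{r_2}\sqrt{|\Delta_L|}$, Minkowski's first theorem produces a nonzero $\alpha \in \mathcal{O}_L\cap B_T$. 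Since $|N_{L/\mathbb{Q}}(\alpha)|\geq 1$, the bounds on the non-distinguished conjugates force $|\sigma_{v_1}(\alpha)|^{d_{v_1}} \geq 2^{d-d_{v_1}}\geq 1$, so the $v_1$-absolute value of $\alpha$ is strictly maximal among its archimedean absolute values.

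For the primitivity condition $L = \mathbb{Q}(\alpha)$, I argue by contradiction: if $\alpha$ belonged to a proper subfield $L' \subsetneq L$, each distinct Galois conjugate of $\alpha$ would repeat at least $[L:L']\geq 2$ times among the $d$ conjugates, so $\sigma_{v_1}(\alpha)$ would coincide with $\sigma_v(\alpha)$ or $\overline{\sigma_v(\alpha)}$ for some $v\neq v_1$, contradicting the strict maximality of $|\sigma_{v_1}(\alpha)|$. The one degenerate case is when $v_1$ is complex and $\sigma_{v_1}(\alpha) = \overline{\sigma_{v_1}(\alpha)} \in \mathbb{R}$; the locus $\{\alpha\in\mathcal O_L : \sigma_{v_1}(\alpha)\in\mathbb{R}\}$ is a sublattice of rank at most $d-1$, and a standard sublattice-avoidance argument (comparing lattice-point counts in $B_T$ to those in this $(d-1)$-dimensional slice, enlarging $T$ by a bounded factor if needed) allows one to choose $\alpha$ outside this sublattice.

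For the height estimate,
\[
h_{\infty}(\alpha) = \frac{d_{v_1}}{d}\log|\sigma_{v_1}(\alpha)| \leq \frac{1}{d}\log T^{d_{v_1}} = \frac{1}{d}\Bigl(r_2\log(2/\pi) + (d-d_{v_1})\log 2 + \tfrac{1}{2}\log|\Delta_L|\Bigr),
\]
and the target $h_\infty(\alpha) \leq \frac{1}{d}\log|\Delta_L|$ reduces to $|\Delta_L| \geq (4/\pi^2)^{r_2}\cdot 4^{d-d_{v_1}}$. This follows from Minkowski's classical discriminant bound $\sqrt{|\Delta_L|} \geq (d^d/d!)(\pi/4)^{r_2}$ combined with the elementary inequality $d^d/d!\geq 2^{d-1}$ (easy induction). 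I expect the main obstacle to be executing the sublattice-avoidance argument in the complex-$v_1$ case (in particular for totally complex fields, where one cannot arrange $v_1$ to be real) without inflating $T$ enough to spoil the height estimate; the negative term $r_2\log(2/\pi)<0$ provides the slack that should absorb a bounded enlargement of $T$.
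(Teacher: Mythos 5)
Your plan follows the same high-level route as the paper (Minkowski's convex body theorem on a box with one distinguished archimedean coordinate allowed to be large, the norm inequality $|N_{L/\mathbb Q}(\alpha)|\geq 1$ forcing that coordinate to dominate, and primitivity following from the strict dominance). Your volume computation, the Minkowski threshold, and the reduction of the height bound to $|\Delta_L|\geq (4/\pi^2)^{r_2}4^{d-d_{v_1}}$ via the Minkowski discriminant bound and $d^d/d!\geq 2^{d-1}$ are all correct. The paper handles the case where $L$ has a real embedding simply by citing \cite{VaWi13}, which is essentially your argument with $v_1$ real, where no degenerate case arises.

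The genuine gap is exactly where you anticipate it. When $L$ is totally complex, you are forced to take $v_1$ complex, you use the disc $|x|_{v_1}\leq T$ at that coordinate, and then nothing in your setup forbids the Minkowski point $\alpha$ from satisfying $\sigma_{v_1}(\alpha)\in\mathbb{R}$; that is precisely the case in which $\sigma_{v_1}$ and $\overline{\sigma_{v_1}}$ agree on $\alpha$, so the primitivity argument by strict maximality collapses. Your proposed remedy (avoid the real sublattice $\{\sigma_{v_1}(\alpha)\in\mathbb R\}$ by enlarging $T$) is not carried out, and the slack you compute, $\lambda^{4}\leq 4(\pi^4/64)^{r_2}$, is quite small (roughly $\lambda\leq 1.57$ when $r_2=1$), so it is far from clear that a uniform sublattice-avoidance enlargement fits inside it for all fields.

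The paper avoids this issue entirely by choosing a different body at the distinguished coordinate: instead of the disc $|x_1|\leq T$ it takes the rectangle $|\Re(x_1)|<1$, $|\Im(x_1)|\leq T$, together with $|x_j|<1$ for $j\geq 2$. With this shape, if $\alpha$ were non-primitive then either $\sigma_1(\alpha)=\overline{\sigma_1}(\alpha)$, i.e. $\Im(\sigma_1(\alpha))=0$, which forces $|\sigma_1(\alpha)|=|\Re(\sigma_1(\alpha))|<1$; or $\sigma_1(\alpha)$ equals a conjugate at a different place, which already has absolute value $<1$. In both cases $|\sigma_1(\alpha)\cdots\sigma_s(\alpha)|<1$, contradicting $|N(\alpha)|\geq 1$, so $\alpha=0$. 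Thus the degenerate case you are worried about is ruled out by the geometry of the body itself, without any sublattice avoidance and without inflating $T$. To complete your proof you should replace the disc at $v_1$ (when $v_1$ is complex) with this rectangular slab and redo the volume and height computation; this is a small modification but it is the essential missing idea.
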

\begin{proof}
We make use of the exponential height given by $H(x)=\exp(h_\infty(x))$ for any algebraic number $x$. 
If $L$ has a real embedding then the claim follows from \cite{VaWi13} Theorem 1.2 since the proof of this result 
yields an algebraic integer.

Suppose the field $L$ has no real embeddings.
Let $\sigma_{1},\overline{\sigma_{1}},\ldots,\sigma_{s},\overline{\sigma_{s}}$ be the
$s$ pairs of complex conjugate embeddings of $L$. With $\sigma=(\sigma_1,\ldots,\sigma_{s})$ we get an embedding
$\sigma:L\longrightarrow  \mathbb{C}^s.$
The image of the ring of integers $\mathcal{O}_L$ is a lattice with determinant $2^{-s}\sqrt{|\Delta_L|}$. 
We define a convex set $S_T$ in $\mathbb{C}^s$, symmetric with respect to the origin, by
\begin{alignat*}1
S_T=\{(x_1,\ldots,x_{s})\in \mathbb{C}^s\,\vert\,&|\Im(x_1)|\leq T, |\Re(x_1)|<1,|x_2|<1,\ldots,|x_{s}|<1\}.
\end{alignat*}
So $S_T$ has volume $4\pi^{s-1}T$. By Minkowski's convex body theorem we conclude that $S_T$ contains a non-zero lattice point
whenever $T>(\pi/4)(2/\pi)^{s}\sqrt{|\Delta_L|}$.
But such a lattice point $\sigma(\alpha)$ must come from a primitive point $\alpha$, otherwise there exists an embedding $\sigma'$, different from $\sigma_1(\cdot)$,
with $\sigma_1(\alpha)=\sigma'(\alpha)$. If $\sigma'(\cdot)=\overline{\sigma_1}(\cdot)$ then  $\Im(\sigma_1(\alpha))=0$, and if $\sigma'(\cdot)\neq \overline{\sigma_1}(\cdot)$
then $|\sigma_1(\alpha)|=|\sigma'(\alpha)|<1$. In both cases we conclude that  $|\sigma_1(\alpha)\cdots \sigma_{s}(\alpha)|<1$, and hence $\alpha=0$.
As $\alpha$ is integral we get $H(\alpha)\leq (T^2+1)^{1/d}$,
and since $|\Delta_L|\geq 2$ we may conclude  $H(\alpha)\leq |\Delta_L|^{1/d}$. Taking the logarithm yields the claimed inequality.
\end{proof}

Since  $|\Delta_L|=|\Delta_K|^{[L:K]}$ whenever $L/K$ is a finite unramified  extension
one can conclude from the previous lemma  that if $F/K$ is an infinite unramified extension then $\m(F)\leq\frac{1}{[K:\Q]}\log\vert \Delta_{K}\vert $.  
In general it is not so easy to decide whether a number field $K$ has an infinite unramified extension (cf.  \cite{Mai00}).
However, the following proposition shows that there exists an infinite set $S\subset \Qbar$ with finite Northcott number such that $K(S)/K$ has uniformly bounded ramification, with a bound independent of $K$.

 \begin{prop}\label{NN2Trem}
There exists an infinite set $S\subset \Qbar$ with finite Northcott number, and $E_S\in \N$ such that for every number field $K$ and every finite extension $L/K$ with $L\subset K(S)$,
we have  $e(\mathfrak{B}/\mathfrak{B}\cap K)\leq E_S$ for every prime ideal $\mathfrak{B}$ in $\Oseen_L$.
 \end{prop}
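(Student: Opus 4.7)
The strategy is to take $S$ to be a set of algebraic integer generators of an infinite unramified tower $H/K_0$ over a carefully chosen base number field $K_0$. The height bound will come from the discriminant tower formula for unramified extensions combined with Lemma \ref{intgen}, and the ramification bound will transfer uniformly to all $K$ because unramifiedness is a local, base-change-stable property.

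\textbf{Step 1: choice of $K_0$.} By the Golod-Shafarevich theorem there exists a number field $K_0$ (for instance, an imaginary quadratic field $\Q(\sqrt{-d})$ with $d$ a product of sufficiently many distinct primes) whose Hilbert class field tower $H/K_0$ is infinite. In particular, $H$ is an infinite algebraic extension of $K_0$ that is unramified at every finite prime.

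\textbf{Step 2: construction of $S$.} Pick a strictly increasing chain of intermediate fields $K_0 = L_0 \subsetneq L_1 \subsetneq L_2 \subsetneq \cdots$ with each $L_n \subset H$ of finite degree over $\Q$. Lemma \ref{intgen} furnishes, for each $n \geq 1$, an algebraic integer $\alpha_n$ with $\Q(\alpha_n) = L_n$ and $h_\infty(\alpha_n) \leq \log|\Delta_{L_n}|/[L_n:\Q]$. Because $L_n/K_0$ is unramified, the relative different is trivial, so $|\Delta_{L_n}| = |\Delta_{K_0}|^{[L_n:K_0]}$, whence $h_\infty(\alpha_n) \leq \log|\Delta_{K_0}|/[K_0:\Q] =: T$. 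Setting $S = \{\alpha_n : n \geq 1\}$ yields an infinite set (the fields $\Q(\alpha_n)=L_n$ are pairwise distinct) with $\m(S) \leq T < \infty$.

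\textbf{Step 3: ramification bound.} Set $E_S := [K_0:\Q]$. Let $K$ be an arbitrary number field and $L/K$ a finite extension with $L \subset K(S)$; then $L \subset KH$. In the tower $K \subset KK_0 \subset KH$, for any finite prime $\mathfrak{p}$ of $K$ and any prime $\mathfrak{P}$ of $KK_0$ above $\mathfrak{p}$ one has $e(\mathfrak{P}/\mathfrak{p}) \leq [KK_0:K] \leq [K_0:\Q] = E_S$. The upper step $KH/KK_0$ is unramified at every finite prime: localizing, the completion of any finite subextension of $H/K_0$ at a prime of $K_0$ is unramified, and compositing with the completion of $KK_0$ at a prime above retains unramifiedness. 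Multiplicativity of ramification indices in the tower then yields $e(\mathfrak{Q}/\mathfrak{p}) \leq E_S$ for every prime $\mathfrak{Q}$ of $KH$ above $\mathfrak{p}$; restricting $\mathfrak{Q}$ to $L$ gives the required bound $e(\mathfrak{B}/\mathfrak{B}\cap K) \leq E_S$.

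The main obstacle is securing uniformity in $K$ of the ramification bound. The construction sidesteps this by leveraging the base-change-stable nature of unramified extensions: compositing $H$ with any $K$ introduces no new ramification beyond what $KK_0/K$ itself contributes, and that contribution is automatically controlled by $[K_0:\Q]$ alone. Existence of a number field with infinite unramified tower (via Golod-Shafarevich) is the only non-elementary ingredient needed.
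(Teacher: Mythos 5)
Your proposal is correct and follows essentially the same route as the paper: existence of a base field $K_0$ with an infinite everywhere-unramified extension (you invoke Golod--Shafarevich / the Hilbert class field tower; the paper cites Martinet's explicit example, which rests on the same theorem), Lemma \ref{intgen} combined with $|\Delta_L|=|\Delta_{K_0}|^{[L:K_0]}$ to bound heights of generators, and the tower $K\subset KK_0\subset KH$ with the base-change stability of unramifiedness (which the paper cites as Proposition B.2.4 of Bombieri--Gubler and you argue locally) to obtain the uniform bound $E_S=[K_0:\Q]$. The only cosmetic difference is that the paper takes $S=F$ to be the whole unramified extension, whereas you take $S$ to be a discrete set of low-height algebraic-integer generators $\alpha_n$ of an exhausting chain of intermediate fields; both yield the same conclusion since in either case $K(S)\subset KH$.
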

 \begin{proof}
By, e.g., \cite{Mar78} there exists a number field  $K_0$ that admits an infinite unramified extension $F/K_0$. Then $KF/KK_0$ is infinite and unramified (see Proposition B.2.4 page 592 of \cite{BoGu07}). Hence, if $L/K$ is finite and $L\subset KF$ 
then for any prime ideal $\mathfrak{Q}$ in $\Oseen_{LK_0}$ we have 
$e(\mathfrak{Q}/\mathfrak{Q}\cap K)=e(\mathfrak{Q}/\mathfrak{Q}\cap  KK_0)e(\mathfrak{Q}\cap  KK_0/\mathfrak{Q}\cap K)= e(\mathfrak{Q}\cap KK_0/\mathfrak{Q}\cap K)\leq [KK_0:K]\leq [K_0:\Q]$. In particular,  $e(\mathfrak{B}/\mathfrak{B}\cap K)\leq [K_0:\Q]$ for every prime ideal $\mathfrak{B}$ in $\Oseen_L$.
Hence, with $S=F$ and $E_S=[K_0:\Q]$ the extension $K(S)/K$ has ramification uniformly bounded by $E_S$, and we  know from  the previous observation that $\m(S)\leq \frac{1}{[K_0:\Q]}\log\vert \Delta_{K_0}\vert$.
\end{proof}

 The problem of finding $K_0$  that admits an infinite unramified extension and minimises $\frac{1}{[K_0:\Q]}\log\vert \Delta_{K_0}\vert$ has found much interest, see for instance
Martinet \cite{Mar78} who showed that $K_0=\Q(\cos(2\pi/11),\sqrt{2},\sqrt{-23})$ admits an infinite unramified $2$-tower and satisfies $$\frac{1}{[K_0:\Q]}\log\vert \Delta_{K_0}\vert\leq 4.53.$$

If $K=\Q$ and if we are allowed to have ramification only above a single rational prime $p$, then we can take 
$S=\{p^{1/p^i}\,\vert\, i\in \N\}$ so that
 
$\m(S)=0$. However,  in this example the ramification is not uniformly bounded.

A different way of describing essentially the same example is to take $S=\{x_i\,\vert\, i\in \N\}$ with $x_0=p$ and $x_i\in \Qbar$ satisfying 
$P(x_{i+1},x_i)=0$ for $P(x,t)=x^p-t$.
  This approach of constructing sets with finite Northcott number can be generalised as follows.
 \begin{lem}\label{Height quasi-equiv}
Let $P(x,t)\in \Qbar[x,t]$ be irreducible with $\deg_x P>\deg_t P>0$, and let $S=\{x_i\,\vert\, i\in \N\}$ with $x_i$ pairwise distinct algebraic numbers
satisfying $P(x_{i+1},x_i)=0$ for $i\in \N$.
Then 
$$\m(S)\leq \deg_t P\left(\frac{\gamma_P\deg_x P}{\deg_x P-\deg_t P}\right)^2,$$

where 

$\gamma_P = 5(\log(2^{\min\{\deg_x P, \deg_t P\}}(\deg_x P + 1)(\deg_t P + 1))+h_\infty(P))^{1/2}$.
\end{lem}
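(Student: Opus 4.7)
The plan is to establish and iterate a Silverman-type height ``quasi-equivalence'' on the affine curve $\{P(x,t)=0\}$. Write $d_1=\deg_x P$ and $d_2=\deg_t P$, and set $M:=d_2(\gamma_P d_1/(d_1-d_2))^2$ and $h_*:=\gamma_P^2 d_1/(d_1-d_2)^2\leq M$ (the last inequality because $d_1 d_2\geq 2$). The key ingredient to establish is the comparison
$$|d_1 h_\infty(x)-d_2 h_\infty(t)|\leq \gamma_P\sqrt{\max(d_1 h_\infty(x),\,d_2 h_\infty(t),\,1)}$$
valid for every $(x,t)\in\overline{\mathbb{Q}}^2$ satisfying $P(x,t)=0$. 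Such inequalities for absolutely irreducible bivariate polynomials are classical: on a smooth projective completion of $\{P=0\}$, the coordinate projections have degrees $d_2$ and $d_1$ respectively, and pulling back the hyperplane class from each $\mathbb{P}^1$ factor gives two functions on the curve whose heights are comparable up to a square-root error (see for instance Bombieri--Gubler, \emph{Heights in Diophantine Geometry}). The explicit form of $\gamma_P$ would come from combining Gelfand's inequality (to bound the height of the one-variable specialisations $P(X,t_0)$ and $P(x_0,T)$ in terms of $h_\infty(P)$ and the height of the specialised value) with Cauchy's bound on the modulus of a root of a univariate polynomial, accounted for at every place and simultaneously over all Galois conjugates.

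Granted this inequality, apply it to each pair $(x_{i+1},x_i)$ and analyse the resulting recursion for $h_i:=h_\infty(x_i)$. First observe a stability property: if $h_i\leq M$ and $h_{i+1}>h_i$, then the maximum on the right-hand side is $d_1 h_{i+1}$, and solving the resulting quadratic inequality in $\sqrt{h_{i+1}}$ yields $h_{i+1}\leq h_*\leq M$. Hence once $h_{i_0}\leq M$ one has $h_i\leq M$ for every $i\geq i_0$, by induction.

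Next, produce such an $i_0$. Suppose for contradiction that $h_i>M$ for all $i$. The previous dichotomy forces $h_{i+1}\leq h_i$ for every $i$ (otherwise $h_{i+1}\leq h_*\leq M$, contradicting $h_{i+1}>M$), so $(h_i)$ is non-increasing. In this regime the height quasi-equivalence, applied with either possible value of the maximum, yields the uniform bound $h_{i+1}\leq (d_2/d_1)h_i+(\gamma_P/\sqrt{d_1})\sqrt{h_i}=:f(h_i)$. Being non-increasing and bounded below by $M$, the sequence converges to some $L\geq M>h_*$; passing to the limit gives $L\leq f(L)$, which forces $L\leq h_*$, a contradiction. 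Therefore such an $i_0$ exists, and since the $x_i$ are pairwise distinct, $\{x_i:i\geq i_0\}$ is an infinite subset of $S$ whose elements all have height at most $M$; thus $\m(S)\leq M$ by definition of the Northcott number.

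The main obstacle is the first step: establishing the height quasi-equivalence with exactly the constant $\gamma_P$ of the lemma. The approach above gives an inequality of the correct shape -- a square-root loss times a constant depending on $h_\infty(P)$ and only logarithmically on $d_1,d_2$ -- but matching the precise factor $\gamma_P=5(\log(2^{\min(d_1,d_2)}(d_1+1)(d_2+1))+h_\infty(P))^{1/2}$ requires careful bookkeeping of Gelfand's inequality, of the $\ell_\infty$ versus $\ell_2$ normalisations when passing between $h_\infty(P)$ and $h_2(P)$, and of the accounting of Galois conjugates contributing to the absolute Weil height.
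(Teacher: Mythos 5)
Your overall strategy---iterate a N\'eron--Silverman height quasi-equivalence along the orbit and analyse the resulting recursion for $h_i := h_\infty(x_i)$---is exactly the paper's. The paper, however, does not re-derive the quasi-equivalence: it quotes the explicit version of N\'eron's theorem due to Habegger (\cite{Hab17}, Theorem 1), which delivers precisely the constant $\gamma_P$ appearing in the statement. That is the reference you are missing; $\gamma_P$ is not to be reconstructed from Gelfand's inequality and Cauchy bounds but read off from Habegger. With that ingredient the rest of your recursion argument is essentially the intended one (the paper itself only writes $Q=\gamma_P\sqrt{d_2}$, $q=d_2/d_1$ and states the bound $(Q/(1-q))^2$, leaving the iteration to the reader).

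There are two concrete glitches in your bookkeeping, both caused by the normalisation of the quasi-equivalence you wrote down. Habegger's inequality, as used in the paper, is
\[
\left|\frac{h_\infty(x)}{\deg_t P}-\frac{h_\infty(t)}{\deg_x P}\right|\leq \gamma_P\max\left\{\frac{h_\infty(x)}{\deg_t P},\frac{h_\infty(t)}{\deg_x P}\right\}^{1/2},
\]
which after clearing denominators reads $|d_1 h_\infty(x)-d_2 h_\infty(t)|\leq \sqrt{d_1 d_2}\,\gamma_P\,\max\{d_1 h_\infty(x),d_2 h_\infty(t)\}^{1/2}$; your version omits the factor $\sqrt{d_1 d_2}$. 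Running your computation with the corrected inequality replaces your $h_*=\gamma_P^2 d_1/(d_1-d_2)^2$ by $d_1 d_2\cdot h_*=d_2(\gamma_P d_1/(d_1-d_2))^2=M$, i.e.\ the threshold $h_*$ coincides with $M$ rather than being strictly smaller. This breaks the ``contradiction'' in your second paragraph: if $h_i>M$ for all $i$ you only get $h_i\downarrow L$ with $L\leq f(L)$ forcing $L=M$, not $L\leq h_*<M$. The conclusion still holds, but you must argue differently: since $h_i\to M$, for every $t>M$ all but finitely many of the pairwise-distinct $x_i$ lie in $S_t$, so $S_t$ is infinite and hence $\m(S)=\inf\{t : S_t\ \text{infinite}\}\leq M$. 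As written, your proof leans on a strict inequality $h_*<M$ that the correct constants do not provide.
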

\begin{proof}
This follows from an explicit version  of a result of N\'eron \cite{Ne65}
$$\left|\frac{h_\infty(x_{i+1})}{\deg_t P}-\frac{h_\infty(x_i)}{\deg_x P} \right|\leq \gamma_P\max\left \{\frac{h_\infty(x_{i+1})}{\deg_t P},\frac{h_\infty(x_i)}{\deg_x P}\right \}^{1/2}$$ 
due to Habegger \cite[Theorem 1]{Hab17}. Habegger's inequality implies  
$$h_\infty(x_{i+1})\leq qh_\infty(x_{i})+Q\max\{h_\infty(x_{i}),h_\infty(x_{i+1})\}^{1/2},$$ 
where
$Q=\gamma_P\sqrt{\deg_t P}$, $q=\frac{\deg_t P}{\deg_x P}$, and the upper bound in  Lemma \ref{Height quasi-equiv} is just $\left(\frac{Q}{1-q}\right)^2$.
We leave the details to the reader.
\end{proof}

For the polynomial $P(x,t)=x^2-tx-1$, Smyth (\cite[Theorem 1, pages 137-138]{Smy80})\label{Smyth}  proved much more.
Indeed,  set $x_0=1$, and 
suppose $S=\{x_i\}_{i=0}^\infty\subset \Qbar$ satisfy
$P(x_{i+1},x_i)=0$ for $i\in \N$. 
Then $x_i$ has degree $2^i$ over $\Q$, each $x_i$ is totally real, and the sequence of logarithmic Weil heights $h_\infty(x_i)$ has a limit point  $0.2732\ldots$.
In particular, $\m(S)\leq 0.274$.

Using the the well-known identity
$$h_\infty(\alpha)=\frac{1}{\deg f}\int_{0}^1\log|f(e^{2\pi i t})|dt,$$ 
where $f\in \Z[x]$ is the minimum polynomial of $\alpha$, one has yet another method to construct infinite subsets $S\subset \Qbar$ with finite Northcott number.
Let $f=a_0x^d+\cdots +a_d \in \Z[x]$, and let us write $\|f\|_1=|a_0|+\cdots+|a_d|$ for the length of $f$.

Now let $\{f_i\}_{i=0}^\infty \subset \Z[x]$ be an infinite set of non-constant irreducible polynomials.
Let $S\subset \Qbar$ be such that $S$ contains a root for each polynomial $f_i$.
Then 
$$\m(S)\leq \liminf_{i}\frac{\log \|f_i\|_1}{\deg f_i}.$$

\begin{exa} 
Consider the polynomials  $f_i(x)=x^i-x-1\in \Z[x]$. Selmer \cite{Sel56} has shown that for $i>1$ they are all irreducible
(and Osada \cite[Corollary 3]{O87} showed that they have full Galois group $S_i$). Therefore, $m(S)=0$.
\end{exa}

\begin{exa} 
Consider a sequence of monic polynomials  $f_i(x)\in \Z[x]$ of degree $i$ whose constant term is equal to $\pm p_i$ for a prime $p_i$, and such that 
$\|f_i\|_1<2p_i$. These polynomials are all irreducible over $\Z$. Otherwise, $f_i=gh$ with $g,h\in \Z[x]$, and $g$ has constant term $\pm 1$. Hence $f_i$ would have a zero $\alpha$ of complex absolute value  at most $1$, 
and thus  $p_i=|\alpha^i+a_1\alpha^{i-1}+\cdots+a_{i-1}\alpha|\leq |\|f_i\|_1-p_i|<p_i$.
If $\log p_i=o(i)$, we conclude $m(S)=0$.
\end{exa}

\end{document}